\documentclass[12pt]{amsart}
\usepackage{amsfonts, amsbsy, amsmath, amssymb}

\hoffset -1.5cm

\voffset -1cm

\textwidth 15.5truecm

\textheight 22.5truecm

\usepackage[T1]{fontenc}
\usepackage[utf8]{inputenc}
\usepackage{lmodern}
\usepackage{caption}
\usepackage{amsmath, amsthm, amssymb,amscd, mathrsfs, amsfonts, mathtools,tikz-cd}
\usepackage{relsize}
\usepackage{euler}
\usepackage{pbox}
\usepackage{times}
\usepackage[all]{xy}
\usepackage{todonotes}
\usepackage{xcolor}
\usepackage[bookmarks=true,
bookmarksnumbered=true, breaklinks=true,
pdfstartview=FitH, hyperfigures=false,
plainpages=false, naturalnames=true,
colorlinks=true,pagebackref=true,
pdfpagelabels]{hyperref}


\usepackage{tikz}
\usepackage{pgfplots}

\pgfplotsset{compat=1.10}
\usepgfplotslibrary{fillbetween}
\usetikzlibrary{patterns}
\usetikzlibrary{matrix}

\usepackage[]{amsrefs}

\renewcommand{\PrintDOI}[1]{\href{http://dx.doi.org/\detokenize{#1}}{doi: \detokenize{#1}}%
	\IfEmptyBibField{pages}{, (to appear in print)}{}}

\def\commutatif{\ar@{}[rd]|{\circlearrowleft}}

\newtheorem{thm}{Theorem}[section]
\newtheorem{prop}[thm]{Proposition}
\newtheorem{lem}[thm]{Lemma}
\newtheorem{cor}[thm]{Corollary}

\theoremstyle{definition}
\newtheorem{defn}[thm]{Definition}

\theoremstyle{remark}
\newtheorem{rmk}[thm]{Remark}

\newtheorem{ex}[thm]{Example}

\allowdisplaybreaks





\usepackage{bbm}




\hypersetup{
	colorlinks = true,
	urlcolor = blue,
	linkcolor = blue,
	citecolor = red,
	pdfpagemode = UseNone
}

\title{Notes from a family of smooth $G$-Hilbert schemes}

\author{Boris Tsvelikhovskiy} 
\address{Department of Mathematics, University of California, Riverside, CA 92521, USA} 
\email{borist@ucr.edu}
\begin{document}
\maketitle 

\pgfkeys{/pgfplots/scale/.style={
  x post scale=#1,
  y post scale=#1,
  z post scale=#1}
}
\pgfkeys{/pgfplots/axis labels at tip/.style={
    xlabel style={at={(current axis.right of origin)}, xshift=1.5ex, anchor=center},
    ylabel style={at={(current axis.above origin)}, yshift=1.5ex, anchor=center}}
}

\begin{abstract}
Let $ G = \mathbb{Z}/r\mathbb{Z}$  be the cyclic group of order $r$, and let $\varpi = e^{2\pi i / r}$  denote a primitive $r$ th root of unity. 
Consider the action of $G$ on $\mathbb{C}^n$ via the embedding
$$
    \varphi : G \hookrightarrow GL_n(\mathbb{C}), \qquad 
    \varphi(1) = \mathrm{diag}\!\bigl(
        \underbrace{\varpi, \dots, \varpi}_{s},\,
        \underbrace{\varpi^{-1}, \dots, \varpi^{-1}}_{n - s}
    \bigr),
$$
where $0 < s < n $. Denote the corresponding GIT quotient by
$$
\mathcal{X}_{s,n,r} = \mathrm{Spec}\bigl((\mathbb{C}[z_1,\dots,z_n])^G\bigr).
$$
Then the varieties $\mathcal{X}_{s,n,r}$ is a cyclic quotient singularity of type $\tfrac{1}{r}\bigl(\underbrace{1,\dots,1}_{s}, \underbrace{-1,\dots,-1}_{n-s}\bigr)$.

We show that the associated \(G\)-Hilbert schemes \(\mathcal{Y}_{s,n,r}\) are smooth, connected, and irreducible. The natural morphism
\[
\rho_{s,n,r}:\mathcal{Y}_{s,n,r}\longrightarrow\mathcal{X}_{s,n,r}
\]
is a projective resolution of \(\mathcal{X}_{s,n,r}\), discrepant for \(n \ge 3\). We establish that the irreducible components of the central fiber \(\rho_{s,n,r}^{-1}(0)\) are in bijection with the nontrivial characters of \(G\), thereby realizing the classical McKay correspondence in this family of examples.

Finally, we describe a canonical choice of this bijection via the Fourier--Mukai type functor
\[
\Psi : D^b(\mathrm{Coh}_G(\mathbb{C}^n)) \longrightarrow D^b(\mathrm{Coh}(\mathcal{Y}_{s,n,r})),
\]
by showing that, for each nontrivial irreducible representation of $G$, the corresponding skyscraper sheaf is mapped to a complex whose $0^{\text{th}}$ cohomology is supported on a unique irreducible component of the central fiber $\rho_{s,n,r}^{-1}(0)$.
\end{abstract}

\section{Introduction}

The classical McKay correspondence begins with a finite subgroup $G \subset SL_2(\mathbb{C})$ and the categorical quotient $\mathbb{C}^2 // G$. This variety has an isolated singularity at the origin, commonly known as a \emph{Kleinian singularity}. A natural minimal resolution of this singularity is provided by the $G$-Hilbert scheme, which is a fine moduli space parameterizing zero-dimensional subschemes $\mathcal{Z} \subset \mathbb{C}^2$ such that $H^0(\mathcal{O}_\mathcal{Z})$ is isomorphic to the regular representation of $G$ (see \cite{IN2}). The corresponding resolution map is the Hilbert--Chow morphism.  

More generally, it is well known that $G$-Hilbert schemes are smooth when $G$ is a finite subgroup of $SL_2(\mathbb{C})$ or $SL_3(\mathbb{C})$, and in these cases the morphism 
\[
   G\operatorname{-}\mathrm{Hilb}(\mathbb{C}^n) \longrightarrow \mathbb{C}^n // G
\]
provides a minimal resolution of singularities (see \cite{BKR} and related works). For $n > 3$, however, $G$-Hilbert schemes are rarely smooth.  

In this paper, we focus on a particular family of embeddings of the cyclic group  \( G = \mathbb{Z}/r\mathbb{Z} \) into \( GL_n(\mathbb{C}) \), given by
\[
     1 \longmapsto \mathrm{diag}(\underbrace{\varpi, \ldots, \varpi}_s, \underbrace{\varpi^{-1}, \ldots, \varpi^{-1}}_{n-s}),
   \qquad \varpi = e^{2\pi i / r}, \quad 0 < s < n.
\]

We denote the resulting categorical quotient by
\[
   \mathcal{X}_{s,n,r} := \operatorname{Spec}(\mathbb{C}[z_1, \ldots, z_n]^G),
\]
which has a singularity of type $\tfrac{1}{r}(\underbrace{1, \ldots, 1}_s, \underbrace{-1, \ldots, -1}_{n-s})$. The associated $G$-Hilbert scheme will be denoted by $\mathcal{Y}_{s,n,r}$.  

Our first main result asserts that all schemes $\mathcal{Y}_{s,n,r}$ are smooth, connected, and irreducible (Theorem~\ref{Smooth} and Lemma~\ref{lem:connected_irred}). 
The crucial observation underlying this result is that both $\mathcal{X}_{s,n,r}$ and $\mathcal{Y}_{s,n,r}$ can be realized as toric varieties, allowing us to exploit standard techniques from toric geometry.

The (classical) McKay correspondence establishes a bijection between the nontrivial irreducible representations of $G$ and the irreducible components of the exceptional locus, $\rho^{-1}(0)$, of a resolution $\rho : Y \to X$. 
This correspondence is known to hold for finite subgroups of $SL_2(\mathbb{C})$ and $SL_3(\mathbb{C})$ when $Y$ is realized as the associated $G$-Hilbert scheme (see \cite{McKay, BKR} for the foundational results and \cite{IN3} for the case of finite subgroups $G \subset GL_2(\mathbb{C})$).

We prove that for any resolution 
\(\rho : \mathcal{Y}_{s,n,r} \to \mathcal{X}_{s,n,r}\), 
the central fiber \(\rho^{-1}(0)\) decomposes as a union of \(r-1\) exceptional divisors, thus establishing that the McKay correspondence holds in our setting (Corollary~\ref{DivisorLem}). 
We remark, however, that for \(n \ge 3\) these resolutions are discrepant.

Furthermore, we provide a natural way to choose the bijection, in close analogy with the classical case $n=2$ studied by Kapranov and Vasserot in \cite{KV}. In particular, we show that a Fourier–Mukai type functor
\[
   \Psi : D^b(\mathrm{Coh}_G(\mathbb{C}^n)) \longrightarrow D^b(\mathrm{Coh}(\mathcal{Y}_{s,n,r})),
\]
appearing in \cite{CL}, maps the skyscraper sheaf corresponding to a nontrivial irreducible representation of $G$ to a complex whose $0^\text{th}$ cohomology is supported on a unique irreducible component of $\rho^{-1}(0)$ (see Proposition \ref{prop:top-bottom-cohomology}).

The exposition of the paper is organized as follows. In Section~\ref{sec2}, we introduce the setup, fix the notation and describe the categorical quotients under consideration, realizing them as toric varieties. Section~\ref{sec3} contains the main results on the geometry of the $G$-Hilbert schemes $\mathcal{Y}_{s,n,r}$. We show that these schemes are smooth (Theorem~\ref{smooththm}), describe them explicitly as toric varieties (Theorem~\ref{ToricHilbThm}), and use this description to identify the corresponding exceptional divisors (Corollary~\ref{DivisorLem}). Furthermore, we compute the discrepancy divisor in Lemma~\ref{DiscrepDivLemma} 
and deduce that the resolutions 
\(
\rho\colon \mathcal{Y}_{s,n,r} \longrightarrow \mathcal{X}_{s,n,r}
\)
are discrepant whenever $n>2$ (see Corollary~\ref{discrepDivisor}).
 Section~\ref{sec4} is devoted to applications of these results in the context of the McKay correspondence. Finally, in Section~\ref{sec5}, we realize $\mathcal{Y}_{s,n,r}$ as a moduli space of representations of a suitable quiver, following the framework developed in~\cite{IN}. This description allows us to prove that $\mathcal{Y}_{s,n,r}$ is connected and irreducible (Lemma~\ref{lem:connected_irred}).

\textbf{Acknowledgements.} I would like to thank Yujiro Kawamata, Andreas Krug and Timothy Logvinenko for stimulating discussions on the subject.

\section{Definitions and first results}\label{sec2}
Throughout this paper, we consider the following setup. Let 
\(
G = \mathbb{Z}/r\mathbb{Z}
\) 
be the cyclic group of order $r$, and fix an embedding 
\begin{equation}\label{eq:G-action}
    \varphi: G \hookrightarrow GL_n(\mathbb{C}), \qquad \varphi(1) = \mathrm{diag}(\varpi^{a_1}, \ldots, \varpi^{a_n}),
\end{equation}

where $\varpi = e^{2\pi i / r}$ is a primitive $r$th root of unity. We assume that each exponent \(a_i\) is either \(1\) or \(r-1\). Let 
\[
s := \#\{\, i \mid a_i = 1 \,\}
\] 
denote the number of exponents equal to \(1\), and assume \(0 < s < n\), so that both values occur among the \(a_i\). Up to renumbering of the coordinates, we may take
\[
(a_1, \ldots, a_n) = (\underbrace{1, \ldots, 1}_{s\text{ times}}, \underbrace{r-1, \ldots, r-1}_{n-s\text{ times}}).
\]

Recall that a \emph{cluster} \( \mathcal{Z} \subset \mathbb{C}^n \) is a zero-dimensional subscheme, i.e. a finite-length subscheme supported on finitely many points of \( \mathbb{C}^n \).  
A \emph{$G$-cluster} is a $G$-invariant cluster satisfying
\[
H^0(\mathcal{O}_{\mathcal{Z}}) \simeq \mathrm{Reg},
\]
where $H^0(\mathcal{O}_{\mathcal{Z}})$ denotes the space of global functions on $\mathcal{Z}$, and $\mathrm{Reg}$ is the regular representation of $G$.  In particular, \( \dim H^0(\mathcal{O}_{\mathcal{Z}}) = |G| \), and each irreducible representation of \( G \) appears with multiplicity equal to its dimension.

The \emph{$G$-Hilbert scheme}, written $G\operatorname{-}\mathrm{Hilb}(\mathbb{C}^n)$, is the fine moduli space parameterizing all such $G$-clusters.

For the cyclic group \( G = \mathbb{Z}/r\mathbb{Z} \) considered in this paper, the regular representation decomposes into a direct sum of its one-dimensional irreducible representations:
\[
\mathrm{Reg} = \bigoplus_{k=0}^{r-1} \chi_k,
\]
where each \( \chi_k:G\to\mathbb{C}^{*} \) denotes a distinct one-dimensional character of \( G \).

Let \(\mathcal{R} := \mathbb{C}[z_1, \ldots, z_n]\), and consider the categorical quotient
\begin{equation}\label{eq:DefOfX}
    \mathcal{X}_{s,n,r} := \mathrm{Spec}\!\big(\mathcal{R}^G\big)
\end{equation}

of \(\mathrm{Spec}(\mathcal{R}) = \mathbb{C}^n\) by the natural action of \(G\). We denote by \(\mathcal{Y}_{s,n,r}\) the corresponding \(G\)-Hilbert scheme.

The natural morphism
\begin{equation}\label{eq:Resolution}
\rho : \mathcal{Y}_{s,n,r} \longrightarrow \mathcal{X}_{s,n,r},
\quad
I \longmapsto \mathrm{supp}(I),
\end{equation}
is called the \emph{Hilbert--Chow morphism}. By construction, it is birational, mapping each $G$-cluster to its support in the quotient variety.

\begin{rmk}
Consider the diagonal matrix
\[
\mathrm{diag}\!\bigl(\underbrace{\varpi,\ldots,\varpi}_{s},\, 
\underbrace{\varpi^{\,r-1},\ldots,\varpi^{\,r-1}}_{n-s}\bigr),
\]
where \(\varpi\) is a primitive \(r\)th root of unity. Since \(\varpi^{\,r}=1\), we have
\[
\Bigl(\mathrm{diag}\!\bigl(\underbrace{\varpi,\ldots,\varpi}_{s},\, 
\underbrace{\varpi^{\,r-1},\ldots,\varpi^{\,r-1}}_{n-s}\bigr)\Bigr)^{r-1}
=
\mathrm{diag}\!\bigl(\underbrace{\varpi^{\,r-1},\ldots,\varpi^{\,r-1}}_{s},\, 
\underbrace{\varpi,\ldots,\varpi}_{n-s}\bigr).
\]

Consequently, the associated quotient varieties and Hilbert schemes  are identified:
\[
\mathcal{X}_{s,n,r} = \mathcal{X}_{n-s,n,r}, 
\qquad
\mathcal{Y}_{s,n,r} = \mathcal{Y}_{n-s,n,r}.
\]
In other words, exchanging the roles of \(\varpi\) and \(\varpi^{\,r-1}\) produces the same varieties.
\label{sameVarieties}
\end{rmk}

\subsection{Singularity Type of $\mathcal{X}_{s,n,r}$}

Since the action of \( G \) on \( \mathbb{C}^n \) given in \eqref{eq:G-action} is free away from the origin, 
each variety \( \mathcal{X}_{s,n,r} \) has an isolated singular point 
\[
o \in \operatorname{Spec}(\mathcal{R}^G).
\]
Recall the standard terminology:
a singularity is called \emph{Gorenstein} if the canonical divisor is Cartier,
\emph{canonical} if all discrepancies of exceptional divisors are nonnegative,
and \emph{terminal} if all discrepancies are strictly positive.

We determine the type of the singularity at \( o \)
using the numerical criteria summarized in Theorem~2.3 of~\cite{MS}
(see also the references cited therein immediately preceding the theorem).

\begin{thm}
Let $n>2$. The singularity at $o \in \mathcal{X}_{s,n,r}$ is
\begin{enumerate}
    \item canonical;
    \item terminal;
    \item Gorenstein if and only if $2s-n \equiv 0 \pmod{r}$.
\end{enumerate}
\label{singType}
\end{thm}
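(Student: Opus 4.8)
The plan is to reduce both assertions to the numerical criteria recorded in Theorem~2.3 of~\cite{MS} and then to carry out an elementary ``age'' computation. Since the $G$-action on $\mathbb{C}^n$ is free away from the origin, for $n\ge 2$ it has no quasi-reflections, and each weight $a_i\in\{1,r-1\}$ is coprime to $r$; hence the hypotheses of that theorem are satisfied. The Gorenstein (Watanabe) part of the criterion says $\mathcal{X}_{s,n,r}$ is Gorenstein precisely when $G\subseteq SL_n(\mathbb{C})$, i.e. when $\sum_{i=1}^n a_i\equiv 0\pmod r$. Since $\sum_{i=1}^n a_i = s + (n-s)(r-1) = (n-s)r + (2s-n)$, this condition is exactly $2s-n\equiv 0\pmod r$, which proves (3).

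For (1) and (2) I would, for each $k=1,\dots,r-1$, consider the age
\[
\mathrm{age}(k):=\sum_{i=1}^n\left\langle \frac{k a_i}{r}\right\rangle,
\]
$\langle\,\cdot\,\rangle$ denoting the fractional part, so that the criterion reads: $\mathcal{X}_{s,n,r}$ is canonical if and only if $\mathrm{age}(k)\ge 1$ for all such $k$, and terminal if and only if $\mathrm{age}(k)>1$ for all such $k$. Because $1\le k\le r-1$, one has $\langle k/r\rangle = k/r$ and $\langle k(r-1)/r\rangle = \langle -k/r\rangle = (r-k)/r$, so
\[
\mathrm{age}(k) = \frac{sk + (n-s)(r-k)}{r} = (n-s) + \frac{(2s-n)k}{r}.
\]
This is an affine function of $k$, hence its minimum over $\{1,\dots,r-1\}$ is attained at an endpoint, where $\mathrm{age}(1) = (n-s) + \tfrac{2s-n}{r}$ and $\mathrm{age}(r-1) = s + \tfrac{n-2s}{r}$.

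It then remains to check that both of these values exceed $1$ as soon as $n>2$. If $2s\ge n$, the minimum is $\mathrm{age}(1)$: here $n-s\ge 1$ and the correction term is $\ge 0$, and the inequality is strict because $n-s\ge 2$ already gives $\mathrm{age}(1)\ge 2$, whereas $n-s=1$ forces $s=n-1\ge 2$ and hence $2s-n=s-1\ge 1$. Symmetrically, if $2s<n$, the minimum is $\mathrm{age}(r-1)\ge s\ge 1$, and it is strict since $s\ge 2$ gives $\mathrm{age}(r-1)\ge 2$ while $s=1$ forces $n\ge 3$ and hence $n-2s=n-2\ge 1$. Therefore $\mathrm{age}(k)>1$ for every $k=1,\dots,r-1$, so $\mathcal{X}_{s,n,r}$ is terminal and, a fortiori, canonical, which gives (2) and (1). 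The only genuinely delicate point is this endpoint bookkeeping: one has to pin down exactly where the hypothesis $n>2$ is used --- precisely to exclude the borderline equalities $\mathrm{age}=1$ in the extreme cases $s=1$ and $s=n-1$ --- and to confirm that the criterion of~\cite{MS} is invoked in the quasi-reflection-free setting it requires.
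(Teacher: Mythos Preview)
Your proposal is correct and follows essentially the same approach as the paper: both invoke the Reid--Tai type criterion of~\cite{MS}, compute the same age expression $(n-s)+\tfrac{(2s-n)k}{r}$, and verify it exceeds~$1$ for $n>2$; the Gorenstein argument is identical. The only cosmetic difference is that the paper uses the symmetry $\mathcal{X}_{s,n,r}\cong\mathcal{X}_{n-s,n,r}$ to reduce to the case $2s\ge n$, whereas you handle both signs of $2s-n$ directly via the endpoint analysis---your version is in fact more explicit about where $n>2$ enters.
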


\begin{proof}
By Theorem~2.3 of \cite{MS}, the singularity is canonical if 
\[
\frac{1}{r}\big(sk+(n-s)(r-k)\big) 
= n-s+\frac{(2s-n)k}{r} \;\;\geq 1
\quad \text{for all } 1\leq k \leq n-1,
\]
and terminal if the inequalities are strict. 

Since $\mathcal{X}_{s,n,r} \cong \mathcal{X}_{n-s,n,r}$ (see Remark~\ref{sameVarieties}), we may assume without loss of generality that 
\(\tfrac{n}{2} \le s < n\).  
In this range, the expression above satisfies
\[
n-s + \frac{(2s-n)k}{r} > 1,
\]
which establishes that the singularity is terminal.  

Finally, the singularity is Gorenstein if and only if the image of $G$ is contained in $SL_n(\mathbb{C})$, that is, when the determinant of the action is trivial. 
In our setting, this condition amounts to 
\(
\varpi^{s-(n-s)} = \varpi^{2s-n} = 1,
\) 
which is equivalent to the congruence 
\[
2s - n \equiv 0 \pmod{r}.
\]

\end{proof}

\subsection{Toric description of $\mathcal{X}_{s,n,r}$}
Define the lattice 
\begin{equation}\label{eq:LatticeN}
    N = \mathbb{Z}^n + \tfrac{1}{r}(\underbrace{1, \ldots, 1}_{s}, \underbrace{r-1, \ldots, r-1}_{n-s})\mathbb{Z}, 
\end{equation}
 and let \(M = \mathrm{Hom}_{\mathbb{Z}}(N,\mathbb{Z}),
\) be the dual lattice. Consider the strongly convex rational polyhedral cone
\begin{equation}\label{eq: ConeSigma}
    \sigma = \mathbb{R}_{\geq 0} e_1 + \cdots + \mathbb{R}_{\geq 0} e_n 
\;\subset\; N \otimes_{\mathbb{Z}} \mathbb{R},
\end{equation}
generated by the standard basis vectors.  

Then the invariant ring of $\mathcal{R}$ under $G$ can be described as
\[
\mathcal{R}^G \;\simeq\; \mathbb{C}[\sigma^{\vee} \cap M],
\]
where $\sigma^{\vee}:=\{\, u \in M \mid \langle u, v \rangle \ge 0 \;\; \forall\, v \in \sigma \,\} \subset M \otimes_{\mathbb{Z}} \mathbb{R}$ denotes the dual cone of $\sigma$.  

Equivalently, the categorical quotient
\[
\mathcal{X}_{s,n,r}=\mathbb{C}^n // G \;\simeq\; \mathrm{Spec}(\mathcal{R}^G)
\]
is the affine toric variety associated with the cone $\sigma$ in the lattice $N$.


\begin{defn}
Let \( N_0 = \mathbb{Z}^n \) and \( M_0 = \mathrm{Hom}_{\mathbb{Z}}(N_0, \mathbb{Z}) \).  
Define the group homomorphism
\[
\mathrm{wt} : M_0 \longrightarrow G
\]
which assigns to each lattice element its \(G\)-grading under the diagonal action of \(G\) on monomials.
\end{defn}

Consider the basis of \(M_0\) given by the exponents of the coordinate functions \(z_1, \dots, z_n \in \mathcal{R}\).  
Then the \(G\)-weights for the action described in \eqref{eq:G-action} are
\begin{equation}\label{eq: G-weights}
\mathrm{wt}(z_i) = r-1, \quad 1 \le i \le s, \qquad
\mathrm{wt}(z_j) = 1, \quad s+1 \le j \le n.
\end{equation}
Here, we slightly abuse notation and write \(z_i\) for the corresponding standard basis vector \(e_i \in M_0\).

\section{Geometric properties of $\mathcal{Y}_{s,n,r}$}\label{sec3}

The goal of this section is to better understand the structure of $G$-Hilbert schemes $\mathcal{Y}_{s,n,r}$.

\subsection{Torus fixed points, smoothness and description of central fiber}

Let $T \subset GL_n(\mathbb{C})$ be the diagonal torus in the basis corresponding to the coordinate functions \( z_1, \ldots, z_n \). The induced action of $T$ on $\mathcal{Y}_{s,n,r}$ provides a powerful framework for analyzing its geometric structure. In particular, the fixed points of this torus action correspond to certain monomial ideals, and examining their Zariski tangent spaces allows to establish the smoothness.

\begin{prop}\label{Smooth}
\begin{enumerate}
    \item The scheme $\mathcal{Y}_{s,n,r}$ has $s(n-s)(r-2)+n$ torus $T$-fixed points.
    \item The dimension of the Zariski tangent space at each fixed point $p$ is equal to $n$, i.e.,
    \[
    \dim \mathrm{Hom}_{\mathcal{R}}(I_p, \mathcal{R}/I_p)^G = n,
    \]
    where $I_p\subset\mathcal{R}$ is the ideal corresponding to $p$.
\end{enumerate}
\end{prop}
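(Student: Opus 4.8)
The plan is to identify the $T$-fixed points of $\mathcal{Y}_{s,n,r}$ with a combinatorially transparent family of monomial ideals, count them, and then compute the $G$-invariant homomorphism space at each.

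\textbf{Step 1: fixed points as $G$-graphs.} A $T$-fixed $G$-cluster is supported at the origin and its ideal $I\subset\mathcal{R}$ is $T$-invariant, hence monomial; the cluster condition says that the standard monomial basis $S$ of $\mathcal{R}/I$ is an order ideal in $\mathbb{Z}_{\ge 0}^n$ of size $r$ on which $\mathrm{wt}$ restricts to a bijection onto $G=\mathbb{Z}/r\mathbb{Z}$ (a \emph{$G$-graph}), and conversely every $G$-graph defines such a fixed point. Since $\mathrm{wt}(z_1)=\cdots=\mathrm{wt}(z_s)=r-1$ and $\mathrm{wt}(z_{s+1})=\cdots=\mathrm{wt}(z_n)=1$ by \eqref{eq: G-weights}, the order ideal $S$ cannot contain two distinct variables from the first block, nor two from the second (they would be two basis elements of equal weight); as $1\in S$ and $|S|=r\ge 2$, $S$ contains at least one variable, so $S$ is supported either on a single coordinate line $\langle z_\ell\rangle$ or on a coordinate plane $\langle z_i,z_j\rangle$ with $1\le i\le s<j\le n$.

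\textbf{Step 2: classification and count.} A $G$-graph supported on $\langle z_i,z_j\rangle$ is an order ideal in $\mathbb{Z}_{\ge 0}^2$ containing exactly one monomial $z_i^{\,p}z_j^{\,q}$ in each residue class of $q-p\bmod r$; an elementary analysis of column heights shows these are precisely the hook-shaped ideals $S_{i,j,k}=\{\,z_j^{\,q}:0\le q\le r-k\,\}\cup\{\,z_i^{\,p}:1\le p\le k-1\,\}$ for $1\le k\le r$. The cases $k=1$ and $k=r$ recover the single-coordinate graphs on $z_j$ and on $z_i$ respectively, while the remaining $r-2$ values of $k$ give genuinely two-dimensional graphs. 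Every single variable $z_\ell$ yields a valid $G$-graph, since $\mathrm{wt}(z_\ell)=\pm1$ generates $G$; this gives $n$ single-coordinate graphs. Summing over the $s(n-s)$ admissible pairs $(i,j)$ and being careful not to recount the degenerate hooks, the $T$-fixed points number $s(n-s)(r-2)+n$, which is (1).

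\textbf{Step 3: Zariski tangent space.} At a fixed point $p$ the tangent space is $\mathrm{Hom}_{\mathcal{R}}(I_p,\mathcal{R}/I_p)^G$, which I compute using the generating syzygies of the Taylor resolution of the relevant monomial ideal. For the single-coordinate ideal $I=(z_\ell^{\,r})+(z_m:m\neq\ell)$, with $n$ minimal generators, every coefficient occurring in an S-pair relation lies in $I$, so $\mathrm{Hom}_{\mathcal{R}}(I,\mathcal{R}/I)=(\mathcal{R}/I)^{\oplus n}$; since $\mathcal{R}/I\cong\mathbb{C}[z_\ell]/(z_\ell^{\,r})$ has a one-dimensional space in each $G$-weight, the equivariant homomorphisms (sending each generator into the weight space matching its own weight) form an $n$-dimensional space. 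For the hook ideal $I=(z_i^{\,k},\,z_iz_j,\,z_j^{\,r-k+1})+(z_m:m\neq i,j)$, with $n+1$ minimal generators, the same vanishing makes every S-pair relation vacuous except the two arising from the pairs $(z_i^{\,k},z_iz_j)$ and $(z_iz_j,z_j^{\,r-k+1})$; using that $\mathcal{R}/I$ is one-dimensional in each weight (so $\phi$ is determined by three scalars together with $n-2$ unconstrained ones), a direct check shows both surviving relations reduce to the single condition $\phi(z_iz_j)=0$, cutting the naive count $n+1$ down to $n$. This establishes (2), and together with $\dim\mathcal{Y}_{s,n,r}\ge n$ it will yield the smoothness asserted in Theorem~\ref{smooththm}.

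The main obstacle is Step 3: one must set up the Taylor presentations of these two families of monomial ideals correctly, track which relations survive reduction modulo $I$, and carry out the $G$-weight bookkeeping for the hook ideals to see that the two surviving relations collapse to one. Step 2 also requires mild care to avoid double-counting the degenerate $k=1,r$ hooks against the single-coordinate graphs.
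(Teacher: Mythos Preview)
Your proposal is correct and follows essentially the same route as the paper: classify the $T$-fixed monomial ideals as hooks supported on at most one coordinate from each weight block, count them, and then show that among the $n+1$ generators of a hook ideal the single constraint $\phi(z_iz_j)=0$ is forced while the remaining $n$ images are free. The only difference is cosmetic---you organize the tangent-space computation via the Taylor presentation and S-pair syzygies, whereas the paper extracts the same constraint by evaluating $\eta(y_ix_j^{\,t})$ in two ways---and you treat the single-coordinate case explicitly rather than leaving it as ``analogous.''
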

\begin{ex}
Let $G = \mathbb{Z}/5\mathbb{Z}$ and $\varphi: G \hookrightarrow GL_3(\mathbb{C})$ be the embedding given by 
\[
\varphi(1) = \mathrm{diag}(\varpi, \varpi, \varpi^4).
\] 
Then the formula $2(3-2)(5-2)+3=9$ predicts $9$ fixed points for the torus $T$-action. Each fixed point $p$ corresponds to an ideal $I_p$ such that $\mathbb{C}[x,y,z]/I_j$ is $5$-dimensional and is a regular representation of $G$. For any fixed point $p$ corresponding to ideal $I_p$, the basis of $\mathbb{C}[x,y,z]/I_p\simeq \mathbb{C}^5$ (regular representation of $\mathbb{Z}/5\mathbb{Z}$) consists of monomials. 

As $\mathrm{wt}(x) = \mathrm{wt}(y) = r-1$, either $x$ or $y$ must lie in $I_p$. If $y \in I_p$, then $\mathrm{wt}(xz) = \mathrm{wt}(1) = 0$ implies $xz \in I_p$ as well, leading to the following possibile pairs of ideals and respective quotients:
\[
\begin{aligned}
(1)\;& I_{\Gamma_1} = \langle x^5,\, y,\, z \rangle, 
&\quad& \Gamma_1 = \{1,\, x,\, x^2,\, x^3,\, x^4\}; \\[3pt]
(2)\;& I_{\Gamma_2} = \langle x^4,\, y,\, z^2,\, xy \rangle, 
&\quad& \Gamma_2 = \{1,\, x,\, x^2,\, x^3,\, z\}; \\[3pt]
(3)\;& I_{\Gamma_3} = \langle x^3,\, y,\, z^3,\, xy \rangle, 
&\quad& \Gamma_3 = \{1,\, x,\, x^2,\, z,\, z^2\}; \\[3pt]
(4)\;& I_{\Gamma_4} = \langle x^2,\, y,\, z^4,\, xy \rangle, 
&\quad& \Gamma_4 = \{1,\,x,\, z,\, z^2,\, z^3,\}; \\[3pt]
(5)\;& I_{\Gamma_5} = \langle x,\, y,\, z^5,\, xy \rangle, 
&\quad& \Gamma_5 = \{1,\, z,\, z^2,\, z^3,\, z^4\}.
\end{aligned}
\]

Analogously, if $x \in I_p$, we obtain:
\[
\begin{aligned}
(6)\;& I_{\Gamma_6} = \langle x,\, y^5,\, z \rangle, 
&\quad& \Gamma_6 = \{1,\, y,\, y^2,\, y^3,\, y^4\}; \\[3pt]
(7)\;& I_{\Gamma_7} = \langle x,\, y^4,\, z^2,\, yz \rangle, 
&\quad& \Gamma_7 = \{1,\, y,\, y^2,\, y^3,\, z\}; \\[3pt]
(8)\;& I_{\Gamma_8} = \langle x,\, y^3,\, z^3,\, yz \rangle, 
&\quad& \Gamma_8 = \{1,\, y,\, y^2,\, z,\, z^2\}; \\[3pt]
(9)\;& I_{\Gamma_9} = \langle x,\, y^2,\, z^4,\, yz \rangle, 
&\quad& \Gamma_9 = \{1,\, y\, z,\, z^2,\, z^3,\}.
\end{aligned}
\]

\end{ex}
Now we present the proof of Proposition \ref{Smooth}.

 \begin{proof}[Proof of Proposition \ref{Smooth}]
Let \(y_1, \dots, y_s\) denote the coordinate functions of \(G\)-weight \(r-1\), and let \(x_1, \dots, x_{n-s}\) denote those of weight \(1\), as given in \eqref{eq: G-weights}.

Since the ideal \(I_p \subset \mathcal{R}\) is fixed by the torus \(T\), it must be a monomial ideal.  Moreover, as $\mathcal{R}/I_p \simeq \bigoplus\limits_{k=0}^{r-1} \chi_k$, all but at most one of the \(y_i\)'s belong to \(I_p\), and the same holds for the \(x_j\)'s. Suppose \(x_j, y_i \notin I_p\) for some \(i,j\). Then, since \(\mathrm{wt}(y_i x_j) = 0\), we must have \(y_i x_j \in I_p\). This leads to fixed points corresponding to ideals (and quotients) of the form:

\[
\begin{aligned}
& I_2^{\,ij}  = \langle\, y_1,\, \dots,\, y_i^2,\, \dots,\, y_s,\; x_1,\, \dots,\, x_j^{r-1},\, \dots,\, x_{n-s},\; y_i x_j \,\rangle,
&& \Gamma_2^{\,ij}  = \{1,\, x_j,\, \dots,\, x_j^{r-2},\,y_i\}; \\[6pt]
& \vdots && \vdots \\[6pt]
& I_{r-1}^{\,ij} = \langle\, y_1,\, \dots,\, y_i^{r-1},\, \dots,\, y_s,\; x_1,\, \dots,\, x_j^2,\, \dots,\, x_{n-s},\; y_i x_j \,\rangle,
&& \Gamma_{r-1}^{\,ij} = \{1,\, x_j,\, y_i^{r-2},\, \dots,\, y_i\}.
\end{aligned}
\]

If all \(y_1, \dots, y_s \in I_p\) or all \(x_1, \dots, x_{n-s} \in I_p\), we obtain:

\[
\begin{aligned}
& I_1^{\,i}   = \langle\, y_1,\, \dots,\, y_i,\, \dots,\, y_s,\; x_1,\, \dots,\, x_j^r,\, \dots,\, x_{n-s},\; y_i x_j \,\rangle,
&& \Gamma_1^{\,i}   =\{1,\, x_j,\, x_j^2,\, \dots,\, x_j^{r-1}\}; \\[4pt]
& I_r^{\,j}   = \langle\, y_1,\, \dots,\, y_i^r,\, \dots,\, y_s,\; x_1,\, \dots,\, x_j,\, \dots,\, x_{n-s},\; y_i x_j \,\rangle,
&& \Gamma_r^{\,j}   = \{1,\, y_i^{r-1},\, y_i^{r-2},\, \dots,\, y_i\}; \\[4pt]
\end{aligned}
\]

Combining these cases, the complete list of \(T\)-fixed ideals is:

\begin{equation}
\label{eq:fixedPoints}
\begin{aligned}
& I_2^{\,ij}  = \langle\, y_1,\, \dots,\, y_i^2,\, \dots,\, y_s,\; x_1,\, \dots,\, x_j^{r-1},\, \dots,\, x_{n-s},\; y_i x_j \,\rangle,
&& \Gamma_2^{\,ij}  = \{1,\, x_j,\, \dots,\, x_j^{r-2},\,y_i\}; \\[6pt]
& \vdots && \vdots \\[6pt]
& I_{r-1}^{\,ij} = \langle\, y_1,\, \dots,\, y_i^{r-1},\, \dots,\, y_s,\; x_1,\, \dots,\, x_j^2,\, \dots,\, x_{n-s},\; y_i x_j \,\rangle,
&& \Gamma_{r-1}^{\,ij} = \{1,\, x_j,\, y_i^{r-2},\, \dots,\, y_i\},\\[4pt]
& I_1^{\,i}   = \langle\, y_1,\, \dots,\, y_i,\, \dots,\, y_s,\; x_1,\, \dots,\, x_j^r,\, \dots,\, x_{n-s},\; y_i x_j \,\rangle,
&& \Gamma_1^{\,i}   =\{1,\, x_j,\, x_j^2,\, \dots,\, x_j^{r-1}\}; \\[4pt]
& I_r^{\,j}   = \langle\, y_1,\, \dots,\, y_i^r,\, \dots,\, y_s,\; x_1,\, \dots,\, x_j,\, \dots,\, x_{n-s},\; y_i x_j \,\rangle,
&& \Gamma_r^{\,j}   = \{1,\, y_i^{r-1},\, y_i^{r-2},\, \dots,\, y_i\}; \\[4pt]
\end{aligned}
\end{equation}

 This yields a total of 
\[
 s(n-s)(r-2)+s + (n-s) = s(n-s)(r-2) + n
\] 
fixed points.

Next, we verify the second assertion for fixed points of type $I_p=I_t^{ij}$, the remaining cases being analogous.
Let $\eta \in \mathrm{Hom}(I_t^{ij}, \mathcal{R}/I_t^{ij})^G$ be an element of the Zariski tangent space to $\mathcal{Y}_{s,n,r}$ at $I_t^{ij}$. 
Since $\mathrm{wt}(y_i x_j) = 0$ and the subspace of weight-zero elements in $\mathcal{R}/I_t^{ij}$ is one-dimensional, spanned by the image of $1$, it follows that $\eta(y_i x_j) \equiv \lambda$ for some $0\neq\lambda \in \mathbb{C}$. 
Next consider $\eta(y_i x_j^t)$. 
On the one hand,
\[
\eta(y_i x_j^t) = x_j^{t-1}\eta(y_i x_j) = \lambda x_j^{t-1}.
\]
On the other hand,
\[
\eta(y_i x_j^t) = y_i \eta(x_j^t) = \mu y_i \cdot y_i^{r-t} = \mu y_i^{r-t+1} \equiv 0 \pmod{I_t^{ij}}.
\]
It follows that $\eta(y_i x_j) = 0$, and therefore
\[
\dim \mathrm{Hom}_{\mathcal{R}}(I_t^{ij}, \mathcal{R}/I_t^{ij})^G \le n.
\]

Note that the $G$-equivariance of $\eta$ implies
\[
\begin{aligned}
&\eta(y_u) = \lambda_u y_i, &&u \in \{1, \dots, \hat{i}, \dots, t\},\\
&\eta(x_v) = \mu_v x_j, &&v \in \{1, \dots, \hat{j}, \dots, n-t\},\\
&\eta(y_i^{r-t}) = \lambda x_j^t,\\
&\eta(x_j^{t+1}) = \mu y_i^{r-t-1},
\end{aligned}
\]
which accounts for a total of $(t-1) + (n-t-1) + 2 = n$ independent parameters. Hence, we conclude that
\(
\dim \mathrm{Hom}_{\mathcal{R}}(I_t^{ij}, \mathcal{R}/I_t^{ij})^G = n.
\)
\end{proof}
 
\begin{thm}\label{smooththm}
The varieties $\mathcal{Y}_{s,n,r}$ are smooth. Moreover, the Hilbert-Chow morphism
\(
\rho: \mathcal{Y}_{s,n,r} \longrightarrow \mathcal{X}_{s,n,r}
\)
is a projective resolution of the quotient singularity $\mathcal{X}_{s,n,r}$.
\end{thm}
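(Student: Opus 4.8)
The plan is to derive both assertions from Proposition~\ref{Smooth} by an equivariant spreading-out argument along the torus $T=(\mathbb{C}^*)^n$ acting diagonally on $\mathbb{C}^n$. Two inputs about $\rho$ are needed first, both standard and independent of the particular family. \textbf{Projectivity:} this is a general property of $G$-Hilbert schemes; in our situation it also follows from the realization of $\mathcal{Y}_{s,n,r}$ as a moduli space of $\theta$-stable McKay quiver representations, hence as $\mathrm{Proj}$ of a finitely generated graded $\mathcal{R}^G$-algebra (see Section~\ref{sec5} and \cite{IN}). \textbf{Birationality:} since $G$ acts freely on $\mathbb{C}^n\setminus\{0\}$, every $G$-cluster supported over a point of $\mathcal{X}_{s,n,r}\setminus\{o\}$ is the corresponding reduced free $G$-orbit, so $\rho$ restricts to an isomorphism $\rho^{-1}(\mathcal{X}_{s,n,r}\setminus\{o\})\xrightarrow{\ \sim\ }\mathcal{X}_{s,n,r}\setminus\{o\}$. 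In particular $\mathcal{Y}_{s,n,r}$ has a distinguished irreducible component of dimension $n$; we will use that it is in fact irreducible of dimension $n$, the classical fact that $G\text{-}\mathrm{Hilb}(\mathbb{C}^n)$ is an irreducible normal toric variety for finite abelian $G$ (this also follows a posteriori from Lemma~\ref{lem:connected_irred} together with the smoothness established here, and from the explicit toric description in Theorem~\ref{ToricHilbThm}).

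Now for smoothness. Since $T$ commutes with $G$, it acts on $\mathcal{Y}_{s,n,r}$ and on $\mathcal{X}_{s,n,r}$ compatibly with $\rho$. The ring $\mathcal{R}^G=\mathbb{C}[\sigma^{\vee}\cap M]$ is generated by $G$-invariant monomials of positive degree, so any one-parameter subgroup $\lambda\colon\mathbb{C}^*\to T$ with strictly positive weights contracts $\mathcal{X}_{s,n,r}$ to the origin: $\lim_{t\to0}\lambda(t)\cdot x=o$ for every $x\in\mathcal{X}_{s,n,r}$. Choosing $\lambda$ generic among such one-parameter subgroups, we may further assume $\mathcal{Y}_{s,n,r}^{\lambda}=\mathcal{Y}_{s,n,r}^{T}$, the finite set of monomial $G$-clusters listed in \eqref{eq:fixedPoints}. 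The smooth locus $U:=\mathcal{Y}_{s,n,r}^{\mathrm{sm}}$ is open and $T$-invariant, and it contains every $T$-fixed point $p$: by Proposition~\ref{Smooth}(2) the Zariski tangent space there has dimension $n$, and since $\dim_p\mathcal{Y}_{s,n,r}=n$ by irreducibility, $p$ is a regular point.

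It remains to propagate smoothness from $\mathcal{Y}_{s,n,r}^{T}$ to all of $\mathcal{Y}_{s,n,r}$. Fix $y\in\mathcal{Y}_{s,n,r}$ and consider the orbit morphism $o_y\colon\mathbb{C}^*\to\mathcal{Y}_{s,n,r}$, $t\mapsto\lambda(t)\cdot y$. Its composite with $\rho$ extends to $\mathbb{A}^1\to\mathcal{X}_{s,n,r}$ sending $0\mapsto o$ (by the contraction above), so by the valuative criterion of properness applied to the proper morphism $\rho$, the morphism $o_y$ itself extends over $t=0$; the extended value $p=\lim_{t\to0}\lambda(t)\cdot y$ is $\lambda$-fixed, hence $p\in\mathcal{Y}_{s,n,r}^{\lambda}=\mathcal{Y}_{s,n,r}^{T}\subseteq U$. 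Since $U$ is open, $\lambda(t)\cdot y\in U$ for $|t|$ small, and since $U$ is $T$-invariant this forces $y\in U$. Therefore $U=\mathcal{Y}_{s,n,r}$, i.e.\ $\mathcal{Y}_{s,n,r}$ is smooth. Finally, $\rho$ is a projective birational morphism from the smooth variety $\mathcal{Y}_{s,n,r}$ to $\mathcal{X}_{s,n,r}$ that is an isomorphism over the smooth locus $\mathcal{X}_{s,n,r}\setminus\{o\}$, hence a projective resolution of the quotient singularity.

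I expect the main obstacle to be bookkeeping rather than conceptual: one must ensure the imported facts — properness of $\rho$ and irreducibility (equivalently $\dim_p\mathcal{Y}_{s,n,r}=n$ at the fixed points) — are available without circularity with Lemma~\ref{lem:connected_irred}. A fully self-contained route, which also prepares Theorem~\ref{ToricHilbThm}, is to avoid both inputs: around each of the $s(n-s)(r-2)+n$ monomial fixed points write down an explicit flat family of $G$-clusters over $\mathbb{A}^n$, with coordinates the $n$ deformation parameters produced in the proof of Proposition~\ref{Smooth}, and verify that these families define open immersions $\mathbb{A}^n\hookrightarrow\mathcal{Y}_{s,n,r}$ covering $\mathcal{Y}_{s,n,r}$; smoothness is then immediate.
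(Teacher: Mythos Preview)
Your proof is correct and follows essentially the same approach as the paper: choose a generic one-parameter subgroup of $T$, flow every point of $\mathcal{Y}_{s,n,r}$ to a $T$-fixed point, and invoke Proposition~\ref{Smooth}(2) there to conclude smoothness everywhere. The paper's version is terser---it appeals directly to upper semi-continuity of the Zariski tangent space dimension rather than your equivalent formulation via openness and $T$-invariance of the smooth locus, and it does not make explicit the properness and irreducibility inputs you carefully flag---but the underlying argument is the same.
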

\begin{proof}
Consider a generic one-parameter subgroup $\lambda:\mathbb{C}^*\rightarrow T$
with $\lambda(t)\rightarrow 0$ as $t\rightarrow 0$. For any point $I \in \mathcal{Y}_{s,n,r}$, we have that $\lim\limits_{t\rightarrow 0}\lambda(t)\cdot I$ is a $T$-fixed point. As the dimension of Zariski tangent space is an upper semi-continuous function, we conclude that $\mathcal{Y}_{s,n,r}$ is smooth. Thus $\mathcal{Y}_{s,n,r}$ provides a smooth resolution of $\mathcal{X}_{s,n,r}$, and the Hilbert-Chow morphism $\rho$ is projective by construction. 
\end{proof}

\subsection{Toric description of $\mathcal{Y}_{s,n,r}$} Consider the collection of $r-1$ vectors
\begin{equation}\label{eq:Vt}
     v_t := \frac{1}{r} (\underbrace{r-t,\ldots,r-t}_s, \underbrace{t,\ldots,t}_{n-s}),\,1 \le t \le r-1
\end{equation}
in the lattice $N$ (see~\eqref{eq:LatticeN}).  
Let $\widetilde{\mathcal{Y}}_{s,n,r}$ denote the toric variety whose fan is obtained from the cone $\sigma$ (see~\eqref{eq: ConeSigma}) by performing successive star subdivisions along the rays $\mathbb{R}_{\ge 0} v_t$.

To identify the $G$-Hilbert scheme $\mathcal{Y}_{s,n,r}$ with the toric variety $\widetilde{\mathcal{Y}}_{s,n,r}$, we recall the following notion from~\cite{Nak}.

\begin{defn}
A subset $\Gamma$ of monomials in $\mathcal{R}$ is called a \emph{$G$-graph} if
\begin{enumerate}
    \item it contains the constant monomial $1$;
    \item whenever $pq \in \Gamma$, we also have $p \in \Gamma$ and $q \in \Gamma$;
    \item there is a one-to-one correspondence between $\Gamma$ and the irreducible representations of $G$, with respect to the induced action of $G$ on $\mathcal{R}$.
\end{enumerate}
\end{defn}

Using the fact that $G$-graphs are in one-to-one correspondence with torus fixed points, together with our description in~\eqref{eq:fixedPoints}, part~(iii) of Theorem~2.11 in~\cite{Nak} implies the following result.

\begin{thm}\label{ToricHilbThm}
The $G$-Hilbert scheme $\mathcal{Y}_{s,n,r}$ is isomorphic to the toric variety $\widetilde{\mathcal{Y}}_{s,n,r}$.
\end{thm}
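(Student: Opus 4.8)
The plan is to invoke Nakamura's identification of the $G$-Hilbert scheme with an explicit toric variety and then match the combinatorial data on both sides. Recall that for an abelian group $G \subset GL_n(\mathbb{C})$, Nakamura shows that $G\operatorname{-}\mathrm{Hilb}(\mathbb{C}^n)$ is the toric variety whose fan is the ``$G$-Hilbert fan'', obtained by gluing the cones $\sigma_\Gamma$ indexed by the $G$-graphs $\Gamma$, and that the maximal cones of this fan correspond precisely to the $G$-graphs (equivalently, to the $T$-fixed points). So the proof reduces to two tasks: first, check that the fan defining $\widetilde{\mathcal{Y}}_{s,n,r}$ — namely the subdivision of $\sigma$ obtained by successive star subdivisions along the rays $\mathbb{R}_{\ge 0} v_t$, $1 \le t \le r-1$ — is a simplicial fan whose maximal cones are in bijection with the list \eqref{eq:fixedPoints}; and second, verify that under this bijection the cone attached to each $G$-graph $\Gamma_\bullet^{\bullet}$ by Nakamura's recipe coincides with the corresponding cone of the star subdivision.

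First I would describe the fan of $\widetilde{\mathcal{Y}}_{s,n,r}$ explicitly. The rays $v_1, \dots, v_{r-1}$ all lie in the $2$-plane spanned by $e := e_1 + \cdots + e_s$ and $f := e_{s+1} + \cdots + e_n$ (after rescaling), since $v_t = \frac{1}{r}\bigl((r-t)\,\overline{e} + t\,\overline{f}\bigr)$ where $\overline{e}, \overline{f}$ are the averages; they are ordered along the segment from (the direction of) $e$ to (the direction of) $f$. Performing the star subdivisions inserts these rays in order, and because they are collinear in that $2$-plane each new maximal cone is spanned by two consecutive "interior" rays $v_{t}, v_{t+1}$ together with the "transverse" generators $e_1,\dots,\widehat{e_i},\dots, e_s, e_{s+1}, \dots, \widehat{e_j}, \dots, e_n$ — one coordinate direction from each block is dropped. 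I would count these: $s$ choices of which $e_i$ to keep "active" near the $e$-end, $n-s$ choices near the $f$-end, and $r-2$ "interior" chambers between consecutive $v_t$'s, plus the two outermost chambers touching $e$ and $f$ respectively, giving $s(n-s)(r-2) + s + (n-s) = s(n-s)(r-2)+n$ maximal cones. This matches Proposition~\ref{Smooth}(1) exactly, and one checks each cone is smooth (its primitive generators form a $\mathbb{Z}$-basis of $N$) so $\widetilde{\mathcal{Y}}_{s,n,r}$ is smooth.

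Next I would match these maximal cones with the $G$-graphs in \eqref{eq:fixedPoints}. Nakamura's correspondence assigns to a $G$-graph $\Gamma$ the cone $\sigma_\Gamma = \{ v \in N_\mathbb{R} : \langle m, v\rangle \ge 0 \text{ for all } m \in M_0 \text{ whose monomial lies below } \Gamma\}$; concretely the generators of $\sigma_\Gamma^\vee$ are the "corners" of the staircase cut out by the generators of $I_\Gamma$. For the ideal $I_t^{ij}$, whose generators are $y_u$ ($u \ne i$), $x_v$ ($v\ne j$), $y_i^{r-t}$, $x_j^{t+1}$, $y_i x_j$, a direct computation of the dual cone shows that $\sigma_{\Gamma_t^{ij}}$ is generated by $e_u$ ($u \ne i$, from the blocks), $v_{t}$ and $v_{t+1}$ — the two rays $v_t, v_{t+1}$ arise from the "mixed" relations $y_i^{r-t}$ and $x_j^{t+1}$ together with $y_ix_j$. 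Similarly the extremal ideals $I_1^{i}$ and $I_r^{j}$ give the cones containing the extremal ray $e$ (resp. $f$) together with $v_1$ (resp. $v_{r-1}$). I would then observe that, since both fans have the same support $\sigma$, are simplicial, and have the same collection of maximal cones, they are equal; hence $\mathcal{Y}_{s,n,r} \cong \widetilde{\mathcal{Y}}_{s,n,r}$ as toric varieties.

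The main obstacle is the bookkeeping in the dual-cone computation: one must carefully verify that the semigroup generators of $\sigma_{\Gamma_t^{ij}}^\vee$, read off from the minimal monomial generators of $I_t^{ij}$ and the shape of the staircase $\Gamma_t^{ij}$, really dualize to the cone spanned by the claimed rays (in particular that the fractional vectors $v_t$ and no spurious extra rays appear), and to confirm that the cones fit together exactly as prescribed by the star subdivisions rather than merely covering $\sigma$. Once the single "generic" case $I_t^{ij}$ is done, the two boundary cases $I_1^i$ and $I_r^j$ are handled by the same computation with one of the monomial generators replaced by a pure power, so no new ideas are needed there.
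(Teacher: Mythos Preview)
Your approach is essentially the paper's: both deduce the theorem directly from Nakamura's identification of the abelian $G$-Hilbert scheme with the toric variety of the $G$-graph fan (Theorem~2.11(iii) in \cite{Nak}), feeding in the explicit list of $G$-graphs obtained in \eqref{eq:fixedPoints}. The paper simply records the result as an immediate consequence of that citation without spelling out the cone-by-cone matching you outline.
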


\begin{cor}\label{DivisorLem}
Let $\rho$ be the resolution defined in \eqref{eq:Resolution}. 
Then the exceptional locus $\mathrm{Exc}(\rho)$ consists of  $r-1$ irreducible divisors. 
These divisors are in one-to-one correspondence with the rays 
\[
\mathbb{R}_{\ge 0} \, v_t,
\] 
where the vectors $v_t$ are as defined in \eqref{eq:Vt}.
\end{cor}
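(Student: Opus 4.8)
The plan is to deduce Corollary~\ref{DivisorLem} directly from the explicit toric description furnished by Theorem~\ref{ToricHilbThm}, reducing everything to the combinatorics of the fan of $\widetilde{\mathcal{Y}}_{s,n,r}$ and the standard dictionary between toric resolutions and their exceptional loci. First I would recall that a torus-equivariant birational proper morphism of toric varieties corresponds to a refinement of fans, and that the irreducible components of the exceptional locus of such a morphism are precisely the orbit closures $V(\tau)$ associated to those cones $\tau$ of the refined fan which are \emph{not} faces of the original fan; the divisorial components correspond to new rays, i.e.\ rays of the refined fan not already present in $\sigma$. Applying this to $\rho\colon\mathcal{Y}_{s,n,r}\cong\widetilde{\mathcal{Y}}_{s,n,r}\to\mathcal{X}_{s,n,r}$, whose fan is obtained from the single cone $\sigma=\mathbb{R}_{\ge0}e_1+\cdots+\mathbb{R}_{\ge0}e_n$ by successive star subdivisions along the rays $\mathbb{R}_{\ge0}v_t$ for $1\le t\le r-1$, the only rays of the refined fan that are not rays of $\sigma$ are exactly the $r-1$ rays $\mathbb{R}_{\ge0}v_t$.

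The key steps, in order, are: (1) identify, via Theorem~\ref{ToricHilbThm}, the fan $\Sigma'$ of $\mathcal{Y}_{s,n,r}$ as the iterated star subdivision of $\{\sigma\text{ and its faces}\}$ along the $v_t$; (2) observe that $\sigma$ has exactly the rays $\mathbb{R}_{\ge0}e_1,\dots,\mathbb{R}_{\ge0}e_n$, all of which survive in $\Sigma'$, while star subdivision along $\mathbb{R}_{\ge0}v_t$ introduces $v_t$ as a new primitive ray generator of $N$ (here one should check that $v_t$ is primitive in $N$, which follows from \eqref{eq:LatticeN} since $v_t=\tfrac1r(r-t,\dots,r-t,t,\dots,t)\in N$ and no proper fraction of it lies in $N$ when $\gcd$ considerations are accounted for — more precisely $v_t$ generates the ray $\mathbb{R}_{\ge0}v_t$ extremally and its class is the one added); (3) conclude that the new rays of $\Sigma'$ are exactly $\mathbb{R}_{\ge0}v_1,\dots,\mathbb{R}_{\ge0}v_{r-1}$, hence exactly $r-1$ of them; (4) invoke the orbit–cone correspondence: each new ray $\mathbb{R}_{\ge0}v_t$ gives a torus-invariant prime divisor $D_t:=V(\mathbb{R}_{\ge0}v_t)$ on $\mathcal{Y}_{s,n,r}$, and each such $D_t$ is $\rho$-exceptional because its image in $\mathcal{X}_{s,n,r}$ is the orbit closure attached to the smallest cone of $\sigma$ containing $v_t$, which is $\sigma$ itself (since $v_t$ lies in the relative interior of $\sigma$), i.e.\ the point $o$; (5) verify the converse inclusion, that $\mathrm{Exc}(\rho)$ contains no divisorial component other than the $D_t$: any irreducible divisor contracted by $\rho$ is torus-invariant after taking closure of a general point's orbit (or one argues directly that $\rho$ is an isomorphism over $\mathcal{X}_{s,n,r}\setminus\{o\}$ since the $G$-action is free there, so $\mathrm{Exc}(\rho)=\rho^{-1}(o)$, and then the irreducible components of $\rho^{-1}(o)$ of codimension one are orbit closures of rays mapping to $o$, i.e.\ the $D_t$).

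I expect the main obstacle to be step~(5) together with the bookkeeping in step~(2)–(3): one must be careful that the \emph{successive} star subdivisions do not accidentally create additional new rays beyond the $v_t$ (they do not — star subdivision along a ray $\mathbb{R}_{\ge0}v$ only adds the single ray $\mathbb{R}_{\ge0}v$ and subdivides cones into joins with it, adding no further $1$-dimensional cones), and that the $v_t$ are pairwise distinct and distinct from the $e_i$, so that the count is exactly $r-1$ and not fewer. Distinctness of the $v_t$ is immediate from their coordinates (the ratio of the two constant blocks is $(r-t)/t$, strictly monotonic in $t$), and none equals an $e_i$ since $0<s<n$ forces both blocks to be nonzero. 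For step~(5), the cleanest route is to note that $\rho$ restricts to an isomorphism over the smooth locus $\mathcal{X}_{s,n,r}\setminus\{o\}$ — this is where the hypothesis that the $G$-action is free away from the origin (already used in Section~\ref{sec2}) enters — so that $\mathrm{Exc}(\rho)=\rho^{-1}(o)$, and the codimension-one part of $\rho^{-1}(o)$ is a union of torus-invariant prime divisors, each of which is some $D_t$ by the orbit–cone correspondence; conversely each $D_t$ is visibly contracted, since $v_t$ lies in the interior of $\sigma$. Combining, $\mathrm{Exc}(\rho)$ is the union of the $r-1$ irreducible divisors $D_1,\dots,D_{r-1}$, in bijection with the rays $\mathbb{R}_{\ge0}v_t$, as claimed.
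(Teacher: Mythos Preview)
Your proposal is correct and follows exactly the route the paper intends: Corollary~\ref{DivisorLem} is stated without proof as an immediate consequence of the toric identification in Theorem~\ref{ToricHilbThm}, and what you have written is precisely the standard orbit--cone argument (new rays of the refined fan $\leftrightarrow$ exceptional prime divisors, with each $v_t$ lying in the interior of $\sigma$ so that $D_t$ contracts to the origin) that justifies this. If anything, you have supplied more detail than the paper does---the checks that star subdivision introduces no extra rays, that the $v_t$ are distinct and interior, and that every cone of $\Sigma'$ not contained in a proper face of $\sigma$ must contain some $v_t$---so there is nothing to add.
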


\begin{rmk}
	The endpoints of the primitive vectors $\{v_1,\ldots, v_{r-1}\}$ lie in a two-dimensional plane defined by the equations
	\[
	\begin{cases}
		z_i = z_j, & 1 \leq i < j \leq s, \\[6pt]
		z_i = z_j, & s+1 \leq i < j \leq n, \\[6pt]
		\frac{1}{s}\sum\limits_{i=1}^s z_i + \frac{1}{n-s}\sum\limits_{j=s+1}^n z_j = 1.
	\end{cases}
	\]
\end{rmk}

\begin{rmk}
    Another way to establish the smoothness of $\mathcal{Y}_{s,n,r}$, as in Corollary~\ref{smooththm}, is to check that the volume of each pyramid $\sigma_t^{ij}$, corresponding to $G$-graph $\Gamma^{ij}$, equals $\frac{1}{r}$. The volume is computed as the absolute value of the determinant of the matrix formed by the primitive vectors along the generating rays of $\sigma_t^{ij}$.
\end{rmk}

We now turn to the discrepancy divisor of the resolution.

\begin{lem}
		The discrepancy divisor of the resolution $\rho_{s,n,r}$ (described in \eqref{eq:Resolution}) is
	\[
	\triangle_{\mathcal{Y}_{s,n,r}}:=K_{\mathcal{Y}_{s,n,r}}-\rho_{s,n,r}^*(K_{\mathcal{X}_{s,n,r}})=\sum_{j=1}^{r-1}a_jE_j,
	\]
	where 
	\(
	a_j=n-s+\frac{j(2s-n)}{r}-1.
	\)
\label{DiscrepDivLemma}
\end{lem}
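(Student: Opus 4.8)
The plan is to compute the discrepancy by a direct toric calculation, using the identification $\mathcal{Y}_{s,n,r}\cong\widetilde{\mathcal{Y}}_{s,n,r}$ from Theorem~\ref{ToricHilbThm}. Recall the standard toric discrepancy formula: if $X_\Sigma$ is an affine toric variety with cone $\sigma$ and $X_{\Sigma'}\to X_\Sigma$ is a proper birational toric morphism obtained by subdivision, then for each new ray generated by a primitive vector $v$ the corresponding exceptional divisor $E_v$ appears in $K_{X_{\Sigma'}}-\pi^*K_{X_\Sigma}$ with coefficient $\langle m_\sigma, v\rangle - 1$, where $m_\sigma\in M_{\mathbb{Q}}$ is the (unique, since the singularity is $\mathbb{Q}$-Gorenstein) linear functional taking the value $1$ on each primitive generator $e_1,\dots,e_n$ of $\sigma$. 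So the first step is to write down $m_\sigma$. Since $\sigma=\mathbb{R}_{\ge0}e_1+\cdots+\mathbb{R}_{\ge0}e_n$ is generated by the standard basis vectors and the $\mathbb{Q}$-Gorenstein condition holds, $m_\sigma$ is simply the functional $(x_1,\dots,x_n)\mapsto \sum_i x_i$ on $N\otimes\mathbb{Q}$; one should just note that this lies in $M_{\mathbb{Q}}=\mathrm{Hom}(N,\mathbb{Q})$ and pairs to $1$ with each $e_i$.

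The second step is to evaluate $\langle m_\sigma, v_t\rangle$ for the vectors $v_t=\frac{1}{r}(\underbrace{r-t,\dots,r-t}_{s},\underbrace{t,\dots,t}_{n-s})$ from~\eqref{eq:Vt}, and to check that each $v_t$ is primitive in $N$. Primitivity follows because $v_t$ already has a coordinate description showing it is not a proper integer multiple of a shorter lattice vector (the entries $\frac{r-t}{r}$ and $\frac{t}{r}$ generate, together with the integer lattice, exactly the sublattice needed). Then
\[
\langle m_\sigma, v_t\rangle=\sum_{i=1}^{n}(v_t)_i=\frac{s(r-t)+(n-s)t}{r}=\frac{sr+(n-2s)t}{r}=s+\frac{(n-2s)t}{r}=n-s+\frac{t(2s-n)}{r}.
\]
Hence the discrepancy coefficient of $E_t$ is $\langle m_\sigma, v_t\rangle-1=n-s+\frac{t(2s-n)}{r}-1=a_t$, which is exactly the claimed formula (with $j$ in place of $t$). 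Combining over $1\le t\le r-1$, and using Corollary~\ref{DivisorLem} which identifies the $r-1$ exceptional divisors $E_j$ with the rays $\mathbb{R}_{\ge0}v_j$, gives $\triangle_{\mathcal{Y}_{s,n,r}}=\sum_{j=1}^{r-1}a_jE_j$.

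The one point that needs a little care — and which I expect to be the main (mild) obstacle — is justifying the toric discrepancy formula in the present form, namely that the single functional $m_\sigma$ controls \emph{all} the new exceptional divisors simultaneously. This is where the $\mathbb{Q}$-Gorenstein property of $\mathcal{X}_{s,n,r}$ (established via Theorem~\ref{singType}, or more simply because the singularity is a quotient singularity hence klt and $\mathbb{Q}$-Gorenstein) enters: it guarantees $m_\sigma$ exists as a well-defined element of $M_{\mathbb{Q}}$. One then invokes the standard description of canonical divisors on toric varieties (e.g. $K_{X_\Sigma}=-\sum_\rho D_\rho$ and the fact that $\pi^*$ of a $\mathbb{Q}$-Cartier toric divisor is read off by evaluating the defining support function at the new ray generators). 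Everything else is the arithmetic displayed above, which is routine. I would also remark, as the paper does elsewhere, that $a_j>0$ for all $j$ when $n>2$ in the normalized range $\tfrac n2\le s<n$, which is the input to Corollary~\ref{discrepDivisor}, though that is not needed for the statement of the lemma itself.
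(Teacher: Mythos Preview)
Your approach is the same as the paper's: both use the toric identification from Theorem~\ref{ToricHilbThm} and compute the discrepancy of each $E_t$ as $\langle m_\sigma,v_t\rangle-1$ with $m_\sigma=(1,\dots,1)\in M\otimes\mathbb{Q}$ (the paper phrases this as evaluating the support function of $K_{\mathcal{X}}=-\sum Z_i$ at $v_j$, which is the same calculation).

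There is, however, an arithmetic slip. Your chain
\[
\frac{s(r-t)+(n-s)t}{r}=s+\frac{(n-2s)t}{r}\stackrel{?}{=}n-s+\frac{t(2s-n)}{r}
\]
fails at the last equality (take $n=3$, $s=1$, $r=5$, $t=1$: the two sides are $6/5$ and $9/5$). With $v_t$ as in~\eqref{eq:Vt} the correct value of $\langle m_\sigma,v_t\rangle-1$ is
\[
s+\frac{t(n-2s)}{r}-1 \;=\; a_{r-t},
\]
not $a_t$. So your computation, carried out honestly, yields $\triangle=\sum_t a_{r-t}E_t$, which agrees with the claimed $\sum_j a_jE_j$ only after the relabeling $j\leftrightarrow r-j$ of the exceptional divisors. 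The paper's own proof in fact writes $\sum_i(Z_i,v_j)=\frac{js+(n-s)(r-j)}{r}$, i.e.\ with the roles of $j$ and $r-j$ swapped relative to~\eqref{eq:Vt}, so the same indexing discrepancy is already present there; the \emph{set} $\{a_j:1\le j\le r-1\}$ of discrepancies is correct either way. Just keep track of the labeling and do not force a false equality to make the indices line up.
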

\begin{proof}
	Since both $\mathcal{X}_{s,n,r}$ and $\mathcal{Y}_{s,n,r}$ are toric varieties, one has 
	\[
	K_{\mathcal{X}_{s,n,r}}=-\sum_{i=1}^n Z_i, \qquad 
	K_{\mathcal{Y}_{s,n,r}}=-\sum_{i=1}^n Z_i-\sum_{j=1}^{r-1} E_j.
	\] 
	 The coefficient of $E_j$ in the pullback $\rho_{s,n,r}^*(K_{\mathcal{X}_{s,n,r}})$ is given by
\[
-\sum_{i=1}^n (Z_i, v_j) = -\frac{js + (n-s)(r-j)}{r} = -\left(n-s + \frac{j(2s-n)}{r} - 1\right),
\]  
where the first equality follows from the formula in Chapter~8 of \cite{CLS}. This computation establishes the claim.
\end{proof}

\begin{cor}\label{discrepDivisor}
	\begin{enumerate}
		\item The resolution $\rho_{s,n,r}$ is crepant (i.e. $\triangle_{\mathcal{Y}_{s,n,r}}=0$) if and only if $s=1$ and $n=2$.
		\item Otherwise, the discrepancy divisor is positive ($a_j>0$ for all $j$).
	\end{enumerate}
\end{cor}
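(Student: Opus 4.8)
The approach is to extract both statements directly from the closed formula \(a_j=n-s+\frac{j(2s-n)}{r}-1\), \(1\le j\le r-1\), established in Lemma~\ref{DiscrepDivLemma}: after normalizing the sign of \(2s-n\) the problem reduces to an elementary sign analysis of a linear function of \(j\). The first step is to symmetrize. By Remark~\ref{sameVarieties} the resolution \(\rho_{s,n,r}\) coincides with \(\rho_{n-s,n,r}\), and a direct substitution shows that the coefficient \(a_{r-j}\) computed for the parameter \(s\) equals the coefficient \(a_j\) computed for the parameter \(n-s\); hence the multiset \(\{a_1,\dots,a_{r-1}\}\) is unchanged under \(s\leftrightarrow n-s\), and both conditions appearing in the corollary (the vanishing of \(\triangle_{\mathcal{Y}_{s,n,r}}\) and the strict positivity of every \(a_j\)) are invariant under this exchange. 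Thus I may assume \(\tfrac n2\le s\le n-1\), which provides the two facts used below: \(2s-n\ge 0\) and \(n-s-1\ge 0\).

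Then I would split into two cases according to the sign of \(2s-n\). If \(2s-n>0\), then for every \(j\in\{1,\dots,r-1\}\) the term \(\tfrac{j(2s-n)}{r}\) is a strictly positive rational and \(n-s-1\) is a nonnegative integer, so \(a_j=(n-s-1)+\tfrac{j(2s-n)}{r}>0\); thus \(\triangle_{\mathcal{Y}_{s,n,r}}\) is strictly positive and \(\rho_{s,n,r}\) is not crepant. This case is automatically disjoint from \((s,n)=(1,2)\), for which \(2s-n=0\). If \(2s-n=0\), then \(n\) is even, \(s=n/2\), and \(a_j=\tfrac n2-1\) is independent of \(j\); this common value equals \(0\) exactly when \(n=2\) (equivalently \((s,n)=(1,2)\)), and equals \(\tfrac n2-1\ge 1\) otherwise.

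Combining the cases yields the corollary: \(\triangle_{\mathcal{Y}_{s,n,r}}=0\) occurs precisely in the subcase \(2s-n=0\), \(n=2\), that is \((s,n)=(1,2)\) up to the symmetry of the first step, which is part~(1); and in every remaining case all coefficients \(a_j\) are strictly positive, which is part~(2). I do not anticipate a genuine obstacle here — the whole argument is arithmetic once Lemma~\ref{DiscrepDivLemma} is in hand. The only points requiring care are the reduction to \(2s-n\ge 0\) through Remark~\ref{sameVarieties} and the small but essential remark that \(s\le n-1\) forces \(n-s-1\ge 0\), which is exactly what prevents the smallest coefficient \(a_1\) from dipping below zero once \(2s-n>0\).
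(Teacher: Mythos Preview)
Your proof is correct and follows essentially the same approach as the paper: both arguments reduce, via Remark~\ref{sameVarieties}, to the range \(\tfrac n2\le s\le n-1\) and then read off the sign of \(a_j=(n-s-1)+\tfrac{j(2s-n)}{r}\) from Lemma~\ref{DiscrepDivLemma}. The only cosmetic difference is that the paper treats part~(1) separately by viewing \((2s-n)j+r(n-s-1)=0\) as a linear relation in \(j\) forced to hold for every \(j\), whereas you absorb~(1) into your case split on the sign of \(2s-n\); your packaging is arguably cleaner and avoids the small wrinkle that the paper's ``system of two equations'' reading tacitly needs \(r\ge 3\).
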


\begin{proof}
For $\triangle_{Y}=0$ one needs $\frac{js+(n-s)(r-j)}{r}-1=0$ for all $j$. This is equivalent to $(2s-n)j+r(n-s-1)=0$ for all $j\in\{1,\ldots,r-1\}$, which gives the system of two linear equations on $r,s$ and $n$: $$\begin{cases}
	2s-n=0\\
	r(n-s-1)=0.
\end{cases}$$ 
Since $r\neq 0$, the only solution is $s=1, n=2$. This completes verification of $(1)$. The verification of statement~(2) proceeds in complete analogy with the proof of Theorem~\ref{singType}.
\end{proof}

\section{Further implications: McKay correspondence}\label{sec4}

In this section, we show that the results of the preceding section imply that the classical McKay correspondence holds for the cyclic groups and resolutions under consideration. Moreover, we provide a natural labeling of the irreducible components of the central fiber by the nontrivial characters of $G$.

\subsection{Classical McKay correspondence}
Let $G\subset GL_n(\mathbb{C})$ be a finite subgroup. Recall that the (classical) McKay correspondence refers to the bijection

\begin{equation}\label{McKayCor}
    \left\{
\begin{aligned}
	&\mbox{nontrivial~irreducible}\\
	&\mbox{representations~of  }G
\end{aligned}\right\}\overset{1:1}{\longleftrightarrow}
\left\{
\begin{aligned}
	&	\mbox{irreducible ~components}\\
	&	\mbox{of ~the ~central~ fiber } \rho^{-1}(0)
\end{aligned}\right\},
\end{equation}
where $\rho: Y \to \mathbb{C}^n // G$ is a resolution of singularities.  In cases where such a bijection exists, we say that the \emph{McKay correspondence holds} for $\rho$ and $G$.

We return to the examples considered in this work. The numerical coincidence observed in Corollary~\ref{DivisorLem}—namely, that the number of exceptional divisors equals the number of nontrivial irreducible representations of the cyclic group $G$—suggests that the classical McKay correspondence holds in this setting.

\begin{thm}\label{McKayCorrThm}
The classical McKay correspondence holds  the resolutions $\rho_{s,n,r}$ described in \eqref{eq:Resolution}.
\end{thm}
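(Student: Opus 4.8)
The plan is to deduce the statement directly from the toric description of the resolution in Theorem~\ref{ToricHilbThm} together with the enumeration of exceptional divisors in Corollary~\ref{DivisorLem}; since the McKay correspondence \eqref{McKayCor} asserts merely the existence of a bijection, the proof reduces to identifying the central fibre with the exceptional locus and matching cardinalities.

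First I would recall that, by Theorem~\ref{ToricHilbThm}, $\rho_{s,n,r}$ is the toric morphism $\widetilde{\mathcal{Y}}_{s,n,r}\to\mathcal{X}_{s,n,r}$ induced by the identity on the lattice $N$, the fan of $\widetilde{\mathcal{Y}}_{s,n,r}$ being the star subdivision of $\sigma$ along the rays $\mathbb{R}_{\ge 0}v_t$, $1\le t\le r-1$. Each primitive vector $v_t=\tfrac1r(r-t,\dots,r-t,t,\dots,t)$ has all coordinates strictly positive, hence lies in the relative interior of the maximal cone $\sigma$; the cones of the fan of $\mathcal{X}_{s,n,r}$ are the faces of $\sigma$, so the smallest one containing $v_t$ is $\sigma$ itself. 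By the orbit–cone correspondence the prime toric divisor $E_t:=V(\mathbb{R}_{\ge 0}v_t)$ is therefore contracted by $\rho_{s,n,r}$ to the torus-fixed point $o\in\mathcal{X}_{s,n,r}$, so $E_t\subseteq\rho_{s,n,r}^{-1}(o)$ for every $t$. Conversely, since the $G$-action on $\mathbb{C}^n$ is free away from the origin, $\rho_{s,n,r}$ restricts to an isomorphism over $\mathcal{X}_{s,n,r}\setminus\{o\}$, whence $\rho_{s,n,r}^{-1}(o)\subseteq\mathrm{Exc}(\rho_{s,n,r})=E_1\cup\cdots\cup E_{r-1}$ by Corollary~\ref{DivisorLem}. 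The two inclusions give $\rho_{s,n,r}^{-1}(o)=\bigcup_{t=1}^{r-1}E_t$.

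Next I would observe that the rays $\mathbb{R}_{\ge 0}v_t$ are pairwise distinct, since the ratios $(r-t):t$ are all different; hence the $E_t$ are pairwise distinct irreducible divisors, and none is contained in the union of the others. Consequently $\rho_{s,n,r}^{-1}(o)$ has exactly $r-1$ irreducible components, namely $E_1,\dots,E_{r-1}$. As $G=\mathbb{Z}/r\mathbb{Z}$ has precisely $r-1$ nontrivial (one-dimensional) irreducible representations $\chi_1,\dots,\chi_{r-1}$, the two sets in \eqref{McKayCor} have the same finite cardinality, and any bijection between them certifies that the McKay correspondence holds for $\rho_{s,n,r}$.

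The content is purely a matching of cardinalities, so I do not anticipate a real obstacle. The only point requiring care is ruling out further components of $\rho_{s,n,r}^{-1}(o)$, or lower-dimensional ones, besides the $E_t$: this is exactly what the orbit–cone correspondence supplies, since any cone of the subdivided fan meeting the interior of $\sigma$ contains some $v_t$, so its orbit closure lies inside $E_t$. I would also remark that a canonical choice of the bijection in \eqref{McKayCor}, refining this counting statement, is furnished later by the Fourier–Mukai functor $\Psi$ in Proposition~\ref{prop:top-bottom-cohomology}.
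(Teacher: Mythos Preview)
Your argument is correct and follows the same route the paper takes: the theorem is stated as an immediate consequence of Corollary~\ref{DivisorLem} (the paper gives no separate proof), and you simply make explicit the identification of the central fibre $\rho_{s,n,r}^{-1}(o)$ with the exceptional locus via the orbit--cone correspondence, which the paper leaves implicit. Your additional care in checking that each $v_t$ lies in the interior of $\sigma$, and that no lower-dimensional components appear, is a welcome elaboration but not a different approach.
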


\subsection{Derived McKay correspondence}
Notice that $\mathbb{C}[G]$ (the group algebra of $G$) is naturally isomorphic to $K^G(\mathbb{C}^2)$, the Grothendieck group of $G$-equivariant coherent sheaves on $\mathbb{C}^2$. Following this observation, in \cite{GSV}, the McKay correspondence was realized geometrically  as an isomorphism of Grothendieck groups 
\begin{equation}\label{Eq:IsomOfKgroups}
   K^G(\mathbb{C}^2)\rightarrow K(Y). 
\end{equation}

Let $Coh_G(\mathbb{C}^n)$ be the category of $G$-equivariant coherent sheaves on $\mathbb{C}^n$, and $Coh(Y)$
be the category of coherent sheaves on $Y$. 

Any finite-dimensional representation $V$ of $G$ gives rise to two $G$-equivariant sheaves on
$\mathbb{C}^n$: the skyscraper sheaf 
\begin{equation}\label{Eq: SkyscraperSheaf}
V^!=V\otimes_{\mathbb{C}} \mathcal{O}_{0},
\end{equation}
whose fiber at $0$ is $V$ and all the other fibers vanish, and the
locally free sheaf $\widetilde{V}=V\otimes_{\mathbb{C}} \mathcal{O}_{\mathbb{C}^n}$.

In \cite{KV} the isomorphism of Grothendieck groups in Eq. \eqref{Eq:IsomOfKgroups} was lifted to an equivalence of triangulated categories of coherent sheaves: $$D^b(\mathrm{Coh}_G(\mathbb{C}^2))\rightarrow D^b(\mathrm{Coh}(Y)).$$ In particular, under this equivalence, $\chi^!$,
the skyscraper sheaf at $0$ associated to a nontrivial irreducible $G$-representation $\chi$, is mapped to the structure sheaf
of the corresponding exceptional divisor (twisted by $\mathcal{O}(-1)$). Bridgeland, King and Reid constructed the equivalence $D(\mathrm{Coh}_G(\mathbb{C}^3))\rightarrow D(Y)$ for any finite subgroup $G\subset SL_3(\mathbb{C})$ and $Y = G\operatorname{-}\mbox{Hilb}(\mathbb{C}^3)$ (see \cite{BKR}). They showed that $G\operatorname{-}\mbox{Hilb}(\mathbb{C}^3)$  is a crepant resolution of $\mathbb{C}^3$.  

More generally, let $G\subset GL_n(\mathbb{C})$ be a finite subgroup and consider the affine variety $X=\mathbb{C}^n//G:=Spec(\mathbb{C}[z_1,z_2,\ldots,z_n])^G$. Additionally, suppose that

\begin{enumerate}
	\item $X$ has an isolated singularity at $0$;
	\item there exists a projective resolution $\rho: Y \rightarrow X$. 
\end{enumerate}

In modern terms, the McKay correspondence is most commonly formulated as an equivalence of triangulated categories
\[
  \Psi:  D^b(\mathrm{Coh}_G(\mathbb{C}^n)) \;\simeq\; D^b(\mathrm{Coh}(Y)),
\]

which we will refer to as the \emph{derived McKay correspondence}. The equivalence in \eqref{Eq:DerivedMcKay} is known to hold in several important cases:  
\begin{itemize}
	\item $G\subset SL_2{(\mathbb{C})}$, any $G$ (\cite{KV});
	\item $G\subset SL_3{(\mathbb{C})}$, any $G$, $Y=G\operatorname{-}\mbox{Hilb}(\mathbb{C}^3)$ (\cite{BKR});
	\item  $G\subset SP_{2n}(\mathbb{C})$, any $G$ and crepant symplectic resolution  $(Y,\rho)$ (\cite{BK});
	\item $G\subset SL_n{(\mathbb{C})}$, any abelian $G$,  any crepant symplectic resolution $(Y,\rho)$ (\cite{KawT}).
\end{itemize}

\begin{rmk}
    In the present setting with $n \ge 3$, the resolutions $\rho_{s,n,r}$ are  discrepant (see Corollary \ref{discrepDivisor}). 
This discrepancy implies that it is unlikely for the derived McKay correspondence to hold. 
\end{rmk}

\subsection{Images of skyscraper sheaves and marking of exceptional divisors}
The goal of this subsection is to construct a natural bijection appearing in Eq.~\eqref{McKayCor} through the correspondence
\[
\chi \;\longleftrightarrow\; \operatorname{Supp} \, H^0\!\bigl(\Psi(\chi^!)\bigr),
\]
for the finite cyclic subgroups of $GL_n(\mathbb{C})$ considered in \eqref{eq:G-action}. Here, $\chi^!$ and $\Psi$ are defined in Eq.~\eqref{Eq: SkyscraperSheaf} and Eq.~\eqref{Eq:DerivedMcKay}, respectively.
We interpret this relation as a geometric realization of the McKay correspondence in our setting. 

We briefly recall the necessary setup and refer the reader to~\cite{CL} and the references therein for a more comprehensive exposition.  The $G$-Hilbert scheme $\mathcal{Y}_{s,n,r}$ comes equipped with the universal family 
\(
\mathcal{Z} \subset \mathcal{Y}_{s,n,r} \times \mathbb{C}^n
\)
of $G$-clusters, together with the natural projections
\[
p_1 : \mathcal{Y}_{s,n,r} \times \mathbb{C}^n \longrightarrow \mathcal{Y}_{s,n,r}, 
\qquad 
p_2 : \mathcal{Y}_{s,n,r} \times \mathbb{C}^n \longrightarrow \mathbb{C}^n,
\]
illustrated in Figure~\ref{projections}.  

\begin{center}
\begin{figure}[h!]
\centering
\begin{tikzcd}
&& {\mathcal{Y}_{s,n,r} \times \mathbb{C}^n} \\
\\
{\mathcal{Y}_{s,n,r}} &&&& {\mathbb{C}^n}
\arrow["{p_1}"', from=1-3, to=3-1]
\arrow["{p_2}", from=1-3, to=3-5]
\end{tikzcd}
\caption{The product $\mathcal{Y}_{s,n,r} \times \mathbb{C}^n$ together with its two natural projections.}
\label{projections}
\end{figure}
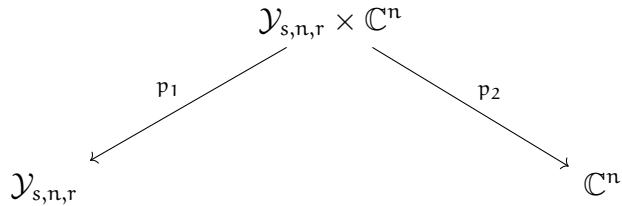
\end{center}

We let $G$ act trivially on $\mathcal{Y}_{s,n,r}$. In this way, it makes sense to consider 
$G$-equivariant coherent sheaves on $\mathcal{Y}_{s,n,r} \times \mathbb{C}^n$. 
Let 
\[
\mathcal{M} \in \mathrm{Coh}_G(\mathcal{Y}_{s,n,r} \times \mathbb{C}^n)
\]
be the structure sheaf of the universal family $\mathcal{Z}\subset \mathcal{Y}_{s,n,r} \times \mathbb{C}^n$. 
By construction, $\mathcal{M}$ is a $G$-equivariant coherent sheaf, flat over $\mathcal{Y}_{s,n,r}$, 
whose fiber over each point of $\mathcal{Y}_{s,n,r}$ is a finite-length $G$-equivariant sheaf on $\mathbb{C}^n$. 

It follows that 
\(
p_{1*}(\mathcal{M}) \in \mathrm{Coh}_G(\mathcal{Y}_{s,n,r})
\)
is a locally free sheaf of finite rank on $\mathcal{Y}_{s,n,r}$. 
Since $G$ acts trivially on $\mathcal{Y}_{s,n,r}$, we may decompose $p_{1*}(\mathcal{M})$ into isotypical components as
\begin{equation}
p_{1*}(\mathcal{M}) \;\cong\; \bigoplus_{\chi \in \mathrm{Irr}(G)} \mathcal{L}_\chi \otimes \chi,    
\end{equation}
where each $\mathcal{L}_\chi$ is a locally free sheaf of rank $\dim(\chi)$. 
These sheaves $\mathcal{L}_\chi$ are usually referred to as \emph{tautological} sheaves, or 
as Gonzalez--Sprinberg--Verdier (GSp-V) sheaves.

Since $\mathbb{C}^n$ is affine, any $G$-equivariant sheaf $\mathcal{F}$ on it is determined by its space of global sections $\Gamma(\mathcal{F})$ 
together with the natural action of $\mathcal{R} \rtimes G$. 
This allows us to define $\widetilde{\mathcal{F}}$, the sheaf obtained from the localization of $\Gamma(\mathcal{F})^*$ with the dual action of $\mathcal{R} \rtimes G$. 
In particular, we let $\widetilde{\mathcal{M}}$ be the fiberwise dual of $\mathcal{M}$.

This data gives rise to a Fourier--Mukai type functor
\begin{equation}\label{Eq:DerivedMcKay}
\Psi \colon D^b(\mathrm{Coh}_G(\mathbb{C}^n)) \;\longrightarrow\; D^b(\mathcal{Y}_{s,n,r}),
\qquad 
\Psi(\bullet) \;:=\; p_{1*}\!\left(\widetilde{\mathcal{M}} \stackrel{L}{\otimes} p_2^*(\bullet)\right).
\end{equation}

The following decomposition of the universal family $\mathcal{M}$ of $G$-clusters appears in \cite{Log} (see Propositions~3.15):
\[
   \mathcal{M} \;=\; \bigoplus_{\chi} \mathcal{L}(-M_{\chi}),
\]
where the sum runs over all irreducible characters $\chi$ of $G$.  
Here, each $G$-Weil divisor $M_{\chi}$ can be expressed as
\[
   M_{\chi} \;=\; \sum_{i=1}^{r-1} m_{\chi,i} E_i,
\]
where the coefficients $m_{\chi,i}$ are determined by the following rule:
\[
   m_{\chi,t} \;=\; \inf \Bigl\{\, v_t\cdot m \;\Bigm|\; 
   m=(m_1,\ldots,m_n) \in \mathbb{Z}_{\geq 0}^n,\;
   \mathbb{C}\langle z_1^{m_1}\cdots z_n^{m_n}\rangle \simeq \chi 
   \;\text{as $G$-representations}\Bigr\}.
\]

In particular, one obtains
\begin{equation}\label{eq:Mdivisors}
\begin{aligned}
&M_{{\chi_0}} = 0, \\[4pt]
&M_{{\chi_1}} = \tfrac{1}{r}\sum_{t=1}^{r-1} t\,E_t, \\[4pt]
&M_{{\chi_{2}}} = \tfrac{1}{r}\sum_{t=1}^{r-2} 2t\,E_t + \tfrac{r-2}{r} E_{r-1}, \\[4pt]
&M_{{\chi_{3}}} = \tfrac{1}{r}\sum_{t=1}^{r-3} 3t\,E_t + \tfrac{2(r-3)}{r} E_{r-2} + \tfrac{r-3}{r} E_{r-1}, \\[4pt]
&\hspace{3cm} \vdots \\[4pt]
&M_{{\chi_{r-1}}} = \tfrac{1}{r}\sum_{t=1}^{r-1} (r-t) E_t .
\end{aligned}
\end{equation}

\begin{ex}
    Let $G=\mathbb{Z}/5\mathbb{Z}$ and consider the embedding 
    \[
      \varphi: G \hookrightarrow GL_6(\mathbb{C}), 
      \quad \varphi(1) = \mathrm{diag}(\varpi,\varpi,\varpi^{-1},\varpi^{-1},\varpi^{-1},\varpi^{-1}),
    \]
    where $\varpi$ is a primitive $5$-th root of unity.  
    Here $s=2$, and the coefficients of the corresponding $G$-Weil divisors in terms of the exceptional divisors on $\mathcal{Y}_{2,6,5}$ are listed in Table~\ref{DivDecompTable}.  

    For example, the coefficient $m_{\chi_3,3}$ of $E_3$ in the decomposition of $M_{\chi_{3}}$ is computed as follows.  
The character $\chi_3$ corresponds to monomials on which $G$ acts by multiplication with $\varpi^3$.  
Among these monomials, the ones whose exponent vector have the smallest dot product with 
\[
v_3 = \tfrac{1}{5}(2,2,3,3,3,3)
\]
is $z_1 z_2$, since $G$ acts on it by multiplication with $\varpi^{-1}\cdot\varpi^{-1}=\varpi^{-2}=\varpi^3$ as required.  
Therefore,
\[
m_{\chi_3,3} = v_3\cdot (1,1,0,0,0,0) = \frac{2+2}{5} = \frac{4}{5}.
\]

    \begin{table}[h!]  
	\centering
	\begin{tabular}{|c|c|c|c|c|}
		\hline
		    &  $E_1$ & $E_2$  & $E_3$  & $E_4$   \\
		\hline
		$M_{{\chi_{0}}}$ & $0$ &  $0$ & $0$  & $0$  \\
		\hline
		$M_{{\chi_{1}}}$ & $1/5$ &  $2/5$ & $3/5$  & $1/5$ \\
		\hline
        $M_{{\chi_{2}}}$ & $2/5$ &  $4/5$ & $6/5$  & $3/5$ \\
		\hline
        $M_{{\chi_{3}}}$ & $3/5$ &  $6/5$ & $4/5$  & $2/5$  \\
		\hline
        $M_{{\chi_{4}}}$ & $4/5$ &  $3/5$ & $2/5$  & $1/5$ \\
		\hline
	\end{tabular}
        \caption{Coefficients of $G$-Weil divisors $M_{\chi_k}$ in terms of exceptional divisors $E_\ell$ for $G=\mathbb{Z}/5\mathbb{Z}$.}
        \label{DivDecompTable}
    \end{table}
\end{ex}

\begin{rmk}
    The coefficients satisfy certain symmetry relations; for example, one checks that
    \[
       m_{\chi_k,\ell} \;=\; m_{\chi_{r-\ell},\,r-k}, \qquad k>0.
    \]
  
\end{rmk}

According to the formula in Proposition $3.2$ in \cite{Log1}, the principal divisors of the coordinate functions are given by
\begin{equation}\label{eq:DivOfCOoordFunc}
    \begin{aligned}
        & (z_i) 
\;=\; \mathcal{Z}_i \;+\; \frac{1}{r}\sum_{t=1}^{r-1} t E_t,
&& 1 \leq i \leq s;\\
& (z_j) 
\;=\; \mathcal{Z}_{s+j} \;+\; \frac{1}{r}\sum_{t=1}^{r-1} (r-t)E_t,&& s+1 \leq j \leq n.
    \end{aligned}
\end{equation}
The image of $\chi^!_{t}$ for finite abelian subgroups 
$G \subset GL_n(\mathbb{C})$ is described in Corollary~$2.5$ of \cite{CL}. 
Using this result, we obtain that 
\(
\Psi(\chi^!_{t})
\)
is represented by the following complex:
\begin{equation}
\mathcal{L}_{n-2s-t} 
\;\longrightarrow\; 
\mathcal{L}_{n-2s-t-1}^{\oplus s} \oplus \mathcal{L}_{n-2s-t+1}^{\oplus (n-s)} 
\;\longrightarrow\; \cdots \;\longrightarrow\;
\mathcal{L}_{-t+1}^{\oplus s} \oplus \mathcal{L}_{-t-1}^{\oplus (n-s)}
\;\longrightarrow\; \mathcal{L}_{-t},
\label{eq:image-of-chi}
\end{equation}
where $\mathcal{L}_{a} \in \mathrm{Coh}(\mathcal{Y}_{s,n,r})$ denotes the line bundle, on which $G$ acts by the character $\chi_a$. The maps in the complex  are induced by multiplication with the coordinate functions:
\begin{equation}
\begin{aligned}\label{eq:maps}
    &\mathcal{L}_{a} \xrightarrow{\cdot z_i} \mathcal{L}_{a-1}, 
    \qquad 1 \leq i \leq s,\\
    &\mathcal{L}_{a} \xrightarrow{\cdot z_j} \mathcal{L}_{a+1}, 
    \qquad s+1 \leq j \leq n.
\end{aligned}
\end{equation}

For each character $\chi_a$ and coordinate function $z_i$, let
\(\mathcal{B}_{\chi_a, z_i}\) denote the effective Weil divisor on $\mathcal{Y}_{s,n,r}$ 
where the morphism 
\(\mathcal{L}_{a} \xrightarrow{\cdot z_i} \mathcal{L}_{a\pm 1}\) vanishes. 
These divisors are exactly the vanishing loci of the maps in \eqref{eq:image-of-chi}. 
Using the explicit formulas in \eqref{eq:Mdivisors} and \eqref{eq:DivOfCOoordFunc}, they can be computed as
follows:
\begin{equation}\label{eqVanishingDivisors}
\begin{aligned}
    &\mathcal{B}_{\chi_a, z_i} 
= M_{\chi_a \otimes \chi_{r-1}} + (z_i) - M_{\chi_a}
= \mathcal{Z}_i + \sum_{t=r-a+1}^{r-1} E_t,
&& 1 \leq i \leq s,\\
&\mathcal{B}_{\chi_a, z_j} 
= M_{\chi_a \otimes \chi_{1}} + (z_j) - M_{\chi_a}
= \mathcal{Z}_{j} + \sum_{t=1}^{r-a-1} E_t,
&& s+1 \leq j \leq n,
\end{aligned}
\end{equation}
(see Section~4.6 of \cite{Log2}).

The top and bottom cohomologies of the images of skyscraper sheaves corresponding to irreducible representations 
of $G$ in $D(\mathcal{Y}_{s,n,r})$ are described as follows.

\begin{prop}
Let $\chi_t$ be an irreducible character of $G$. Then:
\begin{enumerate}
    \item  \(
        H^0\big(\Psi(\chi^!_t)\big) 
        \;\simeq\; \mathcal{L}_{-t} \otimes \mathcal{O}_{E_t}.
    \)
    \item \(
        H^{-n}\big(\Psi(\chi^!_t)\big) = 0 \quad \text{for } t \neq n-2s\), 
   \item 
      \( H^{-n}\big(\Psi(\chi^!_{n-2s})\big) \;\simeq\; \mathcal{O}_{E_1 \cup \dots \cup E_{r-1}}.
    \)
\end{enumerate}
\label{prop:top-bottom-cohomology}
\end{prop}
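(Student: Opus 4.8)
\emph{Plan.} The plan is to derive all three assertions from the explicit model \eqref{eq:image-of-chi} of $\Psi(\chi_t^!)$ as a complex of line bundles on $\mathcal{Y}_{s,n,r}$, in degrees $-n,\dots,0$, combined with the toric description of $\mathcal{Y}_{s,n,r}$ (Theorem~\ref{ToricHilbThm}), the monomial data of the torus--fixed points \eqref{eq:fixedPoints}, and the divisor bookkeeping of \eqref{eq:Mdivisors}, \eqref{eq:DivOfCOoordFunc}, \eqref{eqVanishingDivisors}. Throughout one uses the elementary fact that a nonzero morphism of line bundles on the integral scheme $\mathcal{Y}_{s,n,r}$ is injective, with cokernel the structure sheaf of its (effective, Cartier) vanishing divisor twisted by the target bundle.

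\emph{Part (1).} Here $H^0\bigl(\Psi(\chi_t^!)\bigr)$ is the cokernel of the rightmost differential of \eqref{eq:image-of-chi},
\[
\Bigl(\textstyle\bigoplus_{i=1}^{s}\mathcal{L}_{-t+1}\Bigr)\ \oplus\ \Bigl(\textstyle\bigoplus_{j=s+1}^{n}\mathcal{L}_{-t-1}\Bigr)\ \longrightarrow\ \mathcal{L}_{-t},
\]
whose components are multiplication by the coordinate functions as in \eqref{eq:maps}. The image of the $i$-th, resp.\ $j$-th, component is $\mathcal{L}_{-t}\otimes\mathcal{O}\bigl(-\mathcal{B}_{\chi_{-t+1},z_i}\bigr)$, resp.\ $\mathcal{L}_{-t}\otimes\mathcal{O}\bigl(-\mathcal{B}_{\chi_{-t-1},z_j}\bigr)$, so $H^0\bigl(\Psi(\chi_t^!)\bigr)\simeq\mathcal{L}_{-t}\otimes(\mathcal{O}_{\mathcal{Y}_{s,n,r}}/\mathcal{I})$, where $\mathcal{I}$ is the ideal sheaf generated by the $n$ sections cutting out these divisors. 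Reading the character indices modulo $r$ in \eqref{eqVanishingDivisors}, each of these divisors has the form $\mathcal{Z}_k+E_t+(\text{other exceptional components})$; I would then check, working on the affine toric charts attached to the $G$-graphs of \eqref{eq:fixedPoints} and using \eqref{eq:DivOfCOoordFunc} to locate the $\mathcal{Z}_k$, that these $n$ sections generate precisely $\mathcal{O}\bigl(-E_t\bigr)$ — the key point being that, after factoring out a common local equation of $E_t$, the residual divisors have no common point on $\widetilde{\mathcal{Y}}_{s,n,r}$. This yields $H^0\bigl(\Psi(\chi_t^!)\bigr)\simeq\mathcal{L}_{-t}\otimes\mathcal{O}_{E_t}$. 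The whole step is local and reduces to monomial arithmetic on the charts \eqref{eq:fixedPoints}.

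\emph{Parts (2) and (3).} The bottom cohomology $H^{-n}\bigl(\Psi(\chi_t^!)\bigr)$ is the kernel of the leftmost differential $d^{-n}\colon\mathcal{L}_{n-2s-t}\to\mathcal{L}_{n-2s-t-1}^{\oplus s}\oplus\mathcal{L}_{n-2s-t+1}^{\oplus(n-s)}$, again with components multiplication by coordinate functions. The value $t=n-2s$ is distinguished: it is the character by which $G$ acts on $\Lambda^{n}(\mathbb{C}^{n})^{*}$ — hence the character controlling the discrepancy of Lemma~\ref{DiscrepDivLemma} — and it is the unique $t$ for which the source $\mathcal{L}_{n-2s-t}$ of $d^{-n}$ becomes the structure sheaf $\mathcal{L}_0=\mathcal{O}_{\mathcal{Y}_{s,n,r}}$. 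Passing to the toric charts of \eqref{eq:fixedPoints}, $d^{-n}$ becomes multiplication by the monomials representing $z_1,\dots,z_n$ in the local ring as prescribed by the corresponding $G$-graph. For $t\neq n-2s$ one checks chart by chart that at least one such component is a nonzerodivisor, so $d^{-n}$ is injective and $H^{-n}\bigl(\Psi(\chi_t^!)\bigr)=0$, which is (2). For $t=n-2s$, $d^{-n}$ degenerates along $\rho^{-1}(0)$; identifying its kernel on each chart and using \eqref{eq:DivOfCOoordFunc} to see that the relevant monomials cut out exactly the reduced central fibre, the local kernels glue to $\mathcal{O}_{\mathcal{Y}_{s,n,r}}/\mathcal{O}\bigl(-(E_1+\dots+E_{r-1})\bigr)=\mathcal{O}_{E_1\cup\dots\cup E_{r-1}}$, which is (3).

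\textbf{Main obstacle.} Parts~(1) and (2) come down to the local monomial/divisor bookkeeping above, which is mechanical once the charts \eqref{eq:fixedPoints} of $\mathcal{Y}_{s,n,r}$ are written out explicitly. The genuinely delicate point is the non-vanishing in~(3): this cohomology is invisible over the open torus orbit, where \eqref{eq:image-of-chi} is exact away from degree $0$, and is concentrated along $\rho^{-1}(0)$; extracting it forces one to track the monomial data on every chart, verify compatibility of the local kernels, and confirm that they assemble into the ideal sheaf of the \emph{reduced} exceptional fibre rather than a thickening of it. Pinning down the correct normalisation — the twist and the homological placement — of the complex \eqref{eq:image-of-chi} near the distinguished character $\chi_{n-2s}$ is where the main care is required.
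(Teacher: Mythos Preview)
For part~(1) your strategy matches the paper's: identify the cokernel of the last differential with $\mathcal{L}_{-t}$ restricted to the common vanishing locus of the $n$ component maps. The paper, however, bypasses the chart-by-chart verification you propose and simply reads off from \eqref{eqVanishingDivisors} that
\[
\mathcal{B}_{\chi_{r-t+1},z_i}=\mathcal{Z}_i+\textstyle\sum_{\ell=t}^{r-1}E_\ell\ \ (i\le s),\qquad
\mathcal{B}_{\chi_{r-t-1},z_j}=\mathcal{Z}_j+\textstyle\sum_{\ell=1}^{t}E_\ell\ \ (j>s),
\]
whose intersection is visibly $E_t$; no local monomial arithmetic is needed. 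For parts~(2)--(3) the paper again works globally: it computes $\bigcap_{i=1}^n\mathcal{B}_{\chi_{n-2s-t},\,z_i}$ from \eqref{eqVanishingDivisors}, finds it empty when $n-2s-t\not\equiv 0\pmod r$ and equal to $E_1\cup\dots\cup E_{r-1}$ otherwise, and declares the answers from this.

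There is a genuine gap in your treatment of~(2)--(3), and it is an \emph{internal} inconsistency. You correctly record at the outset that a nonzero morphism of line bundles on the integral scheme $\mathcal{Y}_{s,n,r}$ is injective. But every component of $d^{-n}$ is multiplication by a coordinate function $z_i$, and by \eqref{eqVanishingDivisors} its vanishing divisor $\mathcal{B}_{\chi_{n-2s-t},z_i}$ is a proper effective divisor, so each component is nonzero for \emph{every} $t$. Your own principle then forces $d^{-n}$ to be injective for all $t$, including $t=n-2s$; the ``at least one component is a nonzerodivisor'' criterion you invoke for~(2) does not separate $t\ne n-2s$ from $t=n-2s$ at all. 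Consequently the claim in~(3) that the sheaf kernel of $d^{-n}$ glues to the torsion sheaf $\mathcal{O}_{\mathcal{Y}_{s,n,r}}/\mathcal{O}\bigl(-\sum_i E_i\bigr)$ cannot stand: a subsheaf of a line bundle on an integral scheme is torsion-free, hence either zero or the whole bundle. Whatever produces the nonvanishing asserted in~(3), it is not the sheaf-theoretic kernel computation you outline, and your plan of ``identifying its kernel on each chart'' will return zero on every chart. You would need to revisit how $H^{-n}$ is extracted from the model \eqref{eq:image-of-chi} before the local analysis can begin.
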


\begin{proof}
The zeroth cohomology $H^0(\Psi(\chi^!_t))$ is realized as the cokernel of the map
\[
\mathcal{L}_{r-t+1}^{\oplus s} \oplus \mathcal{L}_{r-t-1}^{\oplus (n-s)} \;\longrightarrow\; \mathcal{L}_{r-t}.
\]
By definition, this cokernel is
\[
\mathcal{L}_{r-t} \otimes \mathcal{O}_Z,
\]
where $Z$ is the intersection of the vanishing loci of the maps in \eqref{eq:maps}:
\[
Z = \bigcap_{i=1}^s \mathcal{B}_{\chi_{r-t+1}, z_i} \;\cap\; \bigcap_{j=s+1}^n \mathcal{B}_{\chi_{r-t-1}, z_j}.
\]

Using the explicit formulas for $\mathcal{B}_{\chi_a, z_i}$ from \eqref{eqVanishingDivisors}, for $1\le i\le s$ and $s+1\le j\le n$ we compute
\[
\mathcal{B}_{\chi_{r-t+1}, z_i} = \mathcal{Z}_i + \sum_{i=t}^{r-1} E_i, 
\qquad 
\mathcal{B}_{\chi_{r-t-1}, z_{j}} = \mathcal{Z}_{j} + \sum_{i=1}^{t} E_i,
\]
which gives
\(
Z = E_t,
\)
completing verification of the first statement.

Similarly, the kernel of the map
\[
\mathcal{L}_{n-2s+r-t} \;\longrightarrow\; 
\mathcal{L}_{n-2s+r-t-1}^{\oplus s} \oplus \mathcal{L}_{n-2s+r-t+1}^{\oplus (n-s)}
\]
is supported on the intersection
\[
\bigcap_{i=1}^n \mathcal{B}_{\chi_{n-2s+r-t}, z_i}.
\]
It follows from the explicit formulas in \eqref{eqVanishingDivisors} that this intersection is empty whenever 
\(
n - 2s + r - t \not\equiv 0 \pmod{r},
\) 
and, when the congruence holds, it coincides with the entire exceptional locus of \(\rho_{s,n,r}\), which is the union of \(r-1\) irreducible components.

Hence, the stated formulas for the top cohomology follow.
\end{proof}

\section{Realization of $\mathcal{Y}_{s,n,r}$ as a quiver variety and Implications}\label{sec5}

We now present an alternative description of $\mathcal{Y}_{s,n,r}$, due to Y.~Ito and H.~Nakajima (see Sections~3 and~4 of \cite{IN}). 
Let us briefly recall the setup. Let $G \subset GL_n(\mathbb{C})$ be a finite subgroup, and let 
\[
Q = (G,\mathbb{C}^n)
\]
denote the McKay quiver associated to the action of $G$. 
The set of vertices of $Q$ is in bijection with the irreducible representations of $G$. 
For vertices corresponding to irreducible representations $\chi_k$ and $\chi_s$, the number of edges between them is given by 
\(
\dim \operatorname{Hom}_G(\chi_k,\, \chi_s \otimes \mathbb{C}^n).
\)

Fix the dimension vector \( v = (1,1,\ldots,1) \); that is, assign to each vertex \( i \) of \( Q \) a one-dimensional vector space \( V_i \).  
For each arrow \( i \to j \) in \( Q \), attach a linear map between the corresponding spaces, which is an element of \( \mathrm{Hom}(V_i, V_j) \).  
Since \( \dim(V_i) = \dim(V_j) = 1 \), each such map is determined by a single scalar, i.e.\ \( \mathrm{Hom}(V_i, V_j) \simeq \mathbb{C} \).

For our choice of the pair \( (G, Q) \), the only nonzero maps occur between \( V_i \) and \( V_j \) whenever \( |i - j| \equiv 1 \pmod r \).  
We denote by \( y \) the maps that increase the index and by \( x \) those that decrease it (by one in both cases).
A collection of scalars
\[
(x^{i+1}_{\ell},\, y^{i}_{t}) 
\quad \text{with } t, \ell \in \{1,\ldots,n\}, \; i \in \{0,\ldots,r-1\},
\]
subject to the commutativity relations
\begin{equation}
\begin{split}
x^{i+1}_{\ell}y^{i}_{t} &= y^{i+1}_{t}x^{i+2}_{\ell}, \\
y^{i}_{t}y^{i+1}_{\ell} &= y^{i}_{\ell}y^{i+1}_{t}, \\
x^{i+1}_{\ell}x^{i+2}_{t} &= x^{i+1}_{t}x^{i+2}_{\ell},
\end{split}
\label{McKayEqns}
\end{equation}
defines a representation of \( Q \) with dimension vector \( v \).

A schematic depiction of such a representation is shown in Figure~\ref{McKayQuiver}.  
For notational convenience, we group the maps as
\[
Y_{i}:=\{y_{1}^1,y_{1}^2,\ldots,y_{i}^s\}, 
\quad 
X_{j}:=\{x_{j}^1,x_{j}^2,\ldots,x_{j}^{\,n-s}\}.
\]

\begin{center}
	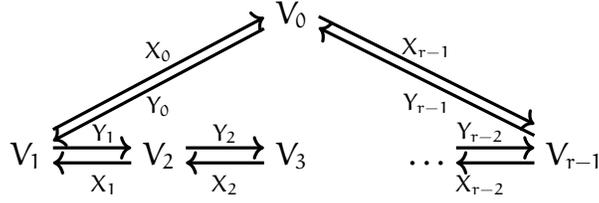
\begin{figure}[h!]
		\begin{tikzpicture}
		\matrix(m)[matrix of math nodes,
		row sep=3em, column sep=2.5em,
		text height=1.5ex, text depth=0.25ex]
		{& & V_0\\ V_1 & V_2 &V_3 & \ldots  & V_{r-1}\\};
		\path[->,font=\scriptsize] 
		(m-1-3) edge[very thick] node[below] {$Y_{0}$} (m-2-1)
		(m-2-1)  edge[very thick, transform canvas={yshift=1.5mm}] node[above] {$X_{0}$} (m-1-3)  
		(m-1-3)  edge[very thick, transform canvas={yshift=1.5mm}] node[above] {$X_{r-1}$} (m-2-5)
		(m-2-5) edge[very thick] node[below] {$Y_{r-1}$} (m-1-3) 
		(m-2-1)  edge[very thick, transform canvas={yshift=1mm}] node[above,yshift=-1mm, xshift=1.6mm] {$Y_{1}$} (m-2-2)
		(m-2-2)  edge[very thick, transform canvas={yshift=-1mm}] node[below, xshift=1.6mm] {$X_{1}$}  (m-2-1) 
		(m-2-2)  edge[very thick, transform canvas={yshift=1mm}] node[above,yshift=-1mm] {$Y_{2}$} (m-2-3)
		(m-2-5)  edge[very thick, transform canvas={yshift=-1mm}] node[below, xshift=-2mm] {$X_{r-2}$} (m-2-4)
		(m-2-4)  edge[very thick, transform canvas={yshift=1mm}] node[above,yshift=-1mm,  xshift=-2mm] {$Y_{r-2}$} (m-2-5)
		(m-2-3)  edge[very thick, transform canvas={yshift=-1mm}] node[below] {$X_{2}$} (m-2-2);
		\end{tikzpicture}
		\caption{McKay quiver $Q=(G,\mathbb{C}^n)$}
		\label{McKayQuiver}
	\end{figure}
\end{center}

Let $\mathcal{I}\subset \mathbb{C}[y_{i}^{t},x_{j}^{\ell}]$ be the ideal generated by the relations \eqref{McKayEqns}, and set 
\[
\mathcal{R}':=\mathbb{C}[y_{i}^{t},x_{j}^{\ell}]/\mathcal{I}.
\]  
Then the affine scheme of representations of $Q$ is 
\(
\mathcal{M}_Q(v):=\operatorname{Spec}(\mathcal{R}').
\)

Next, we consider the moduli space of such representations. Define  
\begin{equation}\label{eq:defOfG}
    \mathcal{G}:=PGL_G(R)=P\left(\prod_{k=0}^{r-1}GL_G(\chi_k)\right)=P((\mathbb{C}^*)^r)\simeq (\mathbb{C}^*)^{r-1}
\end{equation}

to be the group of $G$-equivariant automorphisms of $R$, the space of representations of the McKay quiver $Q$ with  dimension vector $v=(1,1,\ldots,1)$. Let $\theta: \mathcal{G}\to \mathbb{C}^*$ be the character given by  
\[
\theta(t_1,\ldots,t_{r-1})=t_1^{-1}\cdots t_{r-1}^{-1},
\]  
and form the categorical quotient  
\[
\mathcal{M}_Q^{\theta}(v):=\mathcal{Z}//_{\theta}\mathcal{G}
=\operatorname{Proj}\!\left(\bigoplus_{i>0}\mathcal{R}^{\theta,i}\right)
=\mathcal{Z}^{\theta\text{-ss}}/\!\sim_{\mathcal{G}},
\]  
where  
\[
\mathcal{R}^{\theta,i}=\{f\in \mathcal{R}'\mid f(g^{-1}z)=\theta(g)^n f(z), \ \forall g\in \mathcal{G},\ z\in \mathcal{Z}\}.
\]  
Here, two points $x,y\in \mathcal{Z}^{\theta\text{-ss}}$ are equivalent ($x\sim_{\mathcal{G}}y$) if the closures of their $\mathcal{G}$-orbits intersect.  

Notice that for our choice of $\theta$, a point 
\[
z = (\mathbf{Y}_i, \mathbf{X}_j) \in \mathcal{Z}^{\theta\text{-ss}}
\]
is $\theta$-semistable if and only if every nonzero vector $v_0 \in V_0$ is \textit{cyclic}; that is, the vectors obtained by applying all compositions of the maps $y_i^{t}$ and $x_j^{\ell}$ to $v_0$ span each of the spaces $V_i$. 

Since the $G$-Hilbert schemes $\mathcal{Y}_{s,n,r}$ are smooth (Theorem~\ref{smooththm}), we can apply Corollary~4.6 of~\cite{IN} to obtain the following identification:

\begin{thm}\label{QuivVarThm}
There exists an isomorphism of varieties
\[
    \mathcal{Y}_{s,n,r} \;\simeq\; \mathcal{M}_Q^{\theta}(v),
\]
identifying the $G$-Hilbert scheme with the moduli space of $\theta$-stable representations of the quiver $Q$ with dimension vector $v$.
\end{thm}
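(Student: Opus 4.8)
The plan is to deduce the statement from Corollary~4.6 of \cite{IN}: its hypothesis — nonsingularity of the $G$-Hilbert scheme — is Theorem~\ref{smooththm}, so the work left is to line up our conventions with those of \cite{IN} and to recall why smoothness of $\mathcal{Y}_{s,n,r}$ forces the natural morphism between the two moduli problems to be an isomorphism. First I would make the dictionary between $G$-clusters and representations of $Q$ explicit. A $G$-cluster $Z\subset\mathbb{C}^n$ has $H^0(\mathcal{O}_Z)\simeq\mathrm{Reg}=\bigoplus_{k=0}^{r-1}\chi_k$; decomposing $H^0(\mathcal{O}_Z)$ into its one-dimensional isotypic summands $V_k$ (here $G$ abelian is what makes every $\dim\chi=1$) and recording how the coordinate functions act — which by \eqref{eq: G-weights} they do between adjacent summands, along the arrows of $Q$ — yields a representation of $Q$ with dimension vector $v=(1,\ldots,1)$. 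Commutativity of multiplication in $\mathcal{R}$ is exactly the relations \eqref{McKayEqns}, and the fact that $\mathcal{O}_Z$ is generated over $\mathcal{R}$ by the constant function $1\in V_0$ is exactly the cyclicity of a nonzero vector of $V_0$, i.e.\ the $\theta$-semistability condition discussed in Section~\ref{sec5}; since $v$ is indivisible and cyclicity already forbids proper nonzero subrepresentations, $\theta$-semistable $=$ $\theta$-stable here. Conversely a $\theta$-stable representation $(Y_i,X_j)$ recovers a $G$-cluster as $\operatorname{Spec}\bigl(\mathcal{R}/\mathrm{Ann}(v_0)\bigr)$ for any cyclic $v_0\in V_0$.

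Next I would promote this bijection on points to morphisms both ways. From $\mathcal{Y}_{s,n,r}$: the universal family $\mathcal{Z}\subset\mathcal{Y}_{s,n,r}\times\mathbb{C}^n$, with the isotypical decomposition $p_{1*}\mathcal{O}_{\mathcal{Z}}\cong\bigoplus_k\mathcal{L}_{\chi_k}\otimes\chi_k$ and the multiplication maps $\mathcal{L}_{\chi_k}\to\mathcal{L}_{\chi_{k\pm1}}$, gives a family over $\mathcal{Y}_{s,n,r}$ of $\theta$-stable representations of $Q$ satisfying \eqref{McKayEqns}, hence a morphism $\mathcal{Y}_{s,n,r}\to\mathcal{M}_Q^{\theta}(v)$. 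In the other direction, the tautological line bundles $\mathcal{W}_k$ on $\mathcal{M}_Q^{\theta}(v)$ with their tautological maps make $\bigoplus_k\mathcal{W}_k$ a sheaf of $\mathcal{O}_{\mathcal{M}_Q^{\theta}(v)}\otimes\mathcal{R}$-modules — the relations \eqref{McKayEqns} being precisely what makes the $\mathcal{R}$-action well defined — locally free of rank $r=|G|$ and hence flat over $\mathcal{M}_Q^{\theta}(v)$, and a quotient of $\mathcal{O}_{\mathcal{M}_Q^{\theta}(v)}\otimes\mathcal{R}$ via the cyclic vector; the kernel of this quotient is the ideal sheaf of a flat family of $G$-clusters, so the fine moduli property of the $G$-Hilbert scheme yields $\mathcal{M}_Q^{\theta}(v)\to\mathcal{Y}_{s,n,r}$. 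By the previous paragraph these two morphisms are mutually inverse on closed points.

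Finally I would upgrade this to an isomorphism of schemes, and this is where all the weight lies. Because $\mathcal{M}_Q^{\theta}(v)$ is cut out in a space of quiver representations by the relations \eqref{McKayEqns}, it can a priori be non-reduced; the two morphisms above being mutually inverse on closed points and $\mathcal{Y}_{s,n,r}$ being reduced makes $\mathcal{Y}_{s,n,r}\to\mathcal{M}_Q^{\theta}(v)$ a surjective closed immersion, identifying $\mathcal{Y}_{s,n,r}$ with $\bigl(\mathcal{M}_Q^{\theta}(v)\bigr)_{\mathrm{red}}$, so it remains to see that $\mathcal{M}_Q^{\theta}(v)$ is in fact smooth. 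This follows by identifying the Zariski tangent space of $\mathcal{M}_Q^{\theta}(v)$ at the representation corresponding to an ideal $I$ with the deformation space $\mathrm{Hom}_{\mathcal{R}}(I,\mathcal{R}/I)^G$, which by Proposition~\ref{Smooth} has dimension $n=\dim\mathcal{M}_Q^{\theta}(v)$ at every point — and this is exactly the content extracted from Corollary~4.6 of \cite{IN}, valid once $\mathcal{Y}_{s,n,r}$ is known to be smooth by Theorem~\ref{smooththm}. The main obstacle is thus precisely this scheme-theoretic matching of the deformation theory of quiver representations with that of the $G$-Hilbert scheme — ruling out nilpotents and extraneous components on the quiver side — and it is supplied by \cite{IN}.
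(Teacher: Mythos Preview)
Your proposal is correct and follows exactly the paper's approach: the paper simply invokes Corollary~4.6 of \cite{IN} together with the smoothness established in Theorem~\ref{smooththm}, without spelling out any further details. Your write-up is a faithful expansion of what that citation encapsulates --- the dictionary between $G$-clusters and $\theta$-stable representations, the mutually inverse morphisms via universal families, and the reduction of the scheme-theoretic statement to the tangent-space computation --- so there is nothing to correct or compare.
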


 \subsection{Geometry and properties of exceptional divisors}

The exceptional divisors admit the following quiver-theoretic description.

Each exceptional divisor $E_i$ (one of the $r-1$ divisors described in Corollary~\ref{DivisorLem}) corresponds to those representations in $\mathcal{M}_Q^{\theta}(v)$ whose nonzero maps are supported on an acyclic subquiver of $Q$ in which the vertex $i$ is a sink.  
Figure~\ref{DivisorQuiver} illustrates such a subquiver, along with an example of the intersection of two divisors.  
For comparison, Figure~\ref{FixedPtQuiver} depicts a representation corresponding to one of the torus \( T \)-fixed point (see Proposition~\ref{Smooth}).  
In both figures, only the explicitly drawn and labeled arrows may carry nonzero maps; all other maps are understood to be zero.

\begin{center}
\begin{figure}[h!]
	\begin{tikzpicture}
	\matrix(m)[matrix of math nodes,
	row sep=3em, column sep=2.5em,
	text height=1.5ex, text depth=0.25ex]
	{& & V_0\\ V_1 & V_2 &V_3 & \ldots  & V_{r-1}\\};
	\path[->,font=\scriptsize] 
	(m-1-3) edge[very thick] node[below,yshift=2.5mm, xshift=5mm] {$Y_{0}$} (m-2-1)  
	(m-1-3)  edge[very thick] node[above,yshift=-3.5mm, xshift=8mm] {$X_{r-1}$} (m-2-5)
		(m-2-5)  edge[very thick] node[below, xshift=2mm] {$X_{r-2}$} (m-2-4)
	(m-2-1)  edge[very thick] node[above,yshift=-1mm, xshift=1.5mm] {$Y_{1}$} (m-2-2) 
	(m-2-3)  edge[very thick] node[below] {$X_{2}$} (m-2-2)
    (m-2-4)  edge[very thick] node[below] {$X_{3}$} (m-2-3);
	\end{tikzpicture}\hspace{0.05in}	\begin{tikzpicture}
		\matrix(m)[matrix of math nodes,
		row sep=3em, column sep=2.5em,
		text height=1.5ex, text depth=0.25ex]
		{& & V_0\\ V_1 & V_2 &V_3 & \ldots  & V_{r-1}\\};
		\path[->,font=\scriptsize] 
		(m-1-3) edge[very thick] node[below,yshift=2.5mm, xshift=5mm] {$Y_{0}$} (m-2-1)  
		(m-1-3)  edge[very thick] node[above,yshift=-3.5mm, xshift=8mm] {$X_{r-1}$} (m-2-5)
		(m-2-5)  edge[very thick] node[below, xshift=2mm] {$X_{r-2}$} (m-2-4)
		(m-2-1)  edge[very thick] node[above,yshift=-1mm, xshift=1.5mm] {$Y_{1}$} (m-2-2) 
		(m-2-4)  edge[very thick] node[below] {$X_{3}$} (m-2-3);
		\end{tikzpicture}
	\caption{Divisor $E_2$ and intersection of two divisors $E_2\cap E_3$}
	\label{DivisorQuiver}
\end{figure}
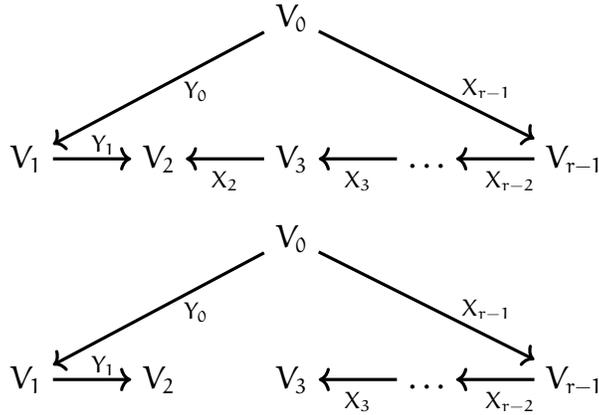
\end{center}

\begin{center}
	\begin{figure}[h!]
		\begin{tikzpicture}
		\matrix(m)[matrix of math nodes,
		row sep=3em, column sep=2.5em,
		text height=1.5ex, text depth=0.25ex]
		{& & V_0\\ V_1 & V_2 &V_3 & \ldots  & V_{r-1}\\};
		\path[->,font=\scriptsize] 
		(m-1-3) edge[very thick] node[below,yshift=2.5mm, xshift=3mm] {$y_{0}^i$} (m-2-1)  
		(m-1-3)  edge[very thick] node[above,yshift=-3.5mm, xshift=8mm] {$x_{r-1}^j$} (m-2-5)
		(m-2-5)  edge[very thick] node[below, xshift=2mm] {$x_{r-2}^j$} (m-2-4)
		(m-2-1)  edge[very thick] node[above,yshift=-1mm, xshift=1.5mm] {$y_{1}^i$} (m-2-2) 
		(m-2-4)  edge[very thick] node[below] {$x_{3}^j$} (m-2-3);
		\end{tikzpicture}
		\caption{Fixed point $I^{ij}_{r-2}$}
		\label{FixedPtQuiver}
	\end{figure}
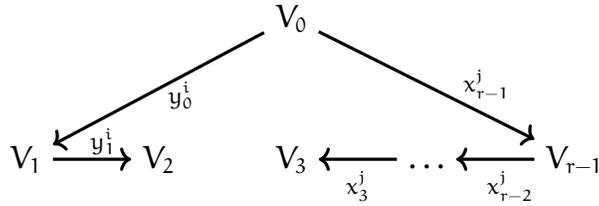
\end{center}


 The following corollary records the pairwise (non)emptiness of intersections among the exceptional divisors \( \{E_i\} \)
and the concrete identification of the extremal divisors in two special cases.
Its verification follows directly from the description of \( \mathcal{Y}_{s,n,r} \) in Theorem~\ref{QuivVarThm}.

\begin{cor}
 The exceptional divisors satisfy 
  \begin{enumerate}
    \item  \(
    E_i \cap E_j = \varnothing \quad \text{if } |i-j| > 1, 
    \qquad 
    E_i \cap E_j \neq \varnothing \quad \text{if } |i-j| = 1;
    \)
    \item if \( s = r-1 \), then \( E_{r-1} \simeq \mathbb{CP}^{r-1} \),
    if \( s = 1 \), then \( E_1 \simeq \mathbb{CP}^{r-1} \).
\end{enumerate}
\end{cor}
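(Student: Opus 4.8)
The plan is to carry out both statements inside the quiver model $\mathcal{Y}_{s,n,r}\cong\mathcal{M}_Q^{\theta}(v)$ of Theorem~\ref{QuivVarThm}, using the dictionary recalled above: a $\theta$-stable representation lies on $E_i$ exactly when its nonvanishing maps can be fitted into an acyclic subquiver of $Q$ in which the vertex $i$ is a sink (equivalently, the corresponding point of the toric variety $\widetilde{\mathcal{Y}}_{s,n,r}$ of Theorem~\ref{ToricHilbThm} lies on the torus-invariant divisor of the ray $\mathbb{R}_{\ge 0}v_i$, by Corollary~\ref{DivisorLem}). The preliminary step is to make this dictionary precise, i.e.\ to match the stratification of $\mathcal{M}_Q^{\theta}(v)$ by \emph{support type} of the representation with the decomposition of $\widetilde{\mathcal{Y}}_{s,n,r}$ into torus orbits; this is forced by the list \eqref{eq:fixedPoints} of $G$-graphs together with part~(iii) of Theorem~2.11 in~\cite{Nak}, since the $T$-fixed points and their incidences already determine the fan.

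For part~(1), a point lies in $E_i\cap E_j$ iff its support is contained in an acyclic subquiver in which \emph{both} $i$ and $j$ are sinks while $V_0$ still generates the representation. Since the underlying graph of $Q$ is the $r$-cycle, the support of such a point forms an acyclic orientation of a connected spanning subgraph of that cycle (acyclic as a subquiver of the acyclic subquiver witnessing $p\in E_i$; connected and spanning because $V_0$ reaches every vertex), hence of the cycle itself or of a Hamiltonian path $C_r\setminus\{a,a+1\}$. The elementary observation is: an acyclic orientation of $C_r$ from which $0$ reaches every vertex has $0$ as its unique source, hence a unique sink, while such an orientation of $C_r\setminus\{a,a+1\}$ has exactly $a$ and $a+1$ as its sinks. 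Thus distinct sinks $i\neq j$ are forced to satisfy $|i-j|=1$, giving $E_i\cap E_j=\varnothing$ when $|i-j|>1$. Conversely, when $|i-j|=1$ one deletes the edge joining $i$ and $j$, orients the resulting Hamiltonian path away from $V_0$, and puts generic nonzero scalars on its arrows; since $0<s<n$ every edge of the cycle carries at least one arrow in each direction, so this is a genuine representation, it is $\theta$-stable (the two monotone paths out of $V_0$ reach all vertices), it sits over $0\in\mathcal{X}_{s,n,r}$ (its support is acyclic, hence nilpotent), and by construction it lies in $E_i\cap E_j$; hence $E_i\cap E_j\neq\varnothing$ when $|i-j|=1$.

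For part~(2) it is enough to treat $s=1$, the case $s=r-1$ being obtained from it by the identification of Remark~\ref{sameVarieties}, which swaps the two coordinate blocks and matches $E_1$ with $E_{r-1}$. When $s=1$, the generic representation parametrised by $E_1$ is supported on the fan-shaped acyclic subquiver with sink the vertex $1$: the single weight-$(r-1)$ arrow $V_0\to V_1$ together with the chain $V_0\to V_{r-1}\to\cdots\to V_2\to V_1$ of the $(n-1)$-tuples of weight-$1$ arrows. The relations of type $xx$ in \eqref{McKayEqns} force the projective direction of this $(n-1)$-tuple to be independent of the level, and after normalising the scaling factors with the gauge group $\mathcal{G}\simeq(\mathbb{C}^*)^{r-1}$ one reads off $E_1$ as the GIT quotient cutting out $\mathbb{CP}^{r-1}$; on the toric side this is the statement that the maximal cones of the fan of $\widetilde{\mathcal{Y}}_{s,n,r}$ containing $\mathbb{R}_{\ge 0}v_1$, pushed into $N/\mathbb{Z}v_1$, form the fan of $\mathbb{CP}^{r-1}$, which one verifies by the same primitive-vector and determinant bookkeeping used in the smoothness remark after Corollary~\ref{DivisorLem}.

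The main obstacle is the preliminary step rather than either of the two conclusions: one must turn the informal description ``$E_i$ is the locus of representations supported on an acyclic subquiver with $i$ a sink'' into a bona fide stratification of $\mathcal{M}_Q^{\theta}(v)$ compatible with the torus orbits of $\widetilde{\mathcal{Y}}_{s,n,r}$, so that emptiness of an intersection $E_i\cap E_j$ is genuinely detected by the combinatorics of subquivers. With that in hand, part~(1) is the short graph-theoretic lemma above, and part~(2) reduces to an explicit GIT (equivalently toric) computation whose only delicate point is keeping track of the $\mathcal{G}$-action carefully enough to identify the quotient with $\mathbb{CP}^{r-1}$ rather than merely with some toric variety of the expected dimension.
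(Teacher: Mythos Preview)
Your overall approach---working inside the quiver model of Theorem~\ref{QuivVarThm}---is exactly what the paper intends, and your argument for part~(1) is sound: the graph-theoretic observation about sinks in acyclic orientations of a cycle with a fixed source is correct and is precisely the combinatorial content of the quiver picture in Figure~\ref{DivisorQuiver}.

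Part~(2), however, has a genuine gap that you should have caught by a dimension count. The variety $\mathcal{Y}_{s,n,r}$ is $n$-dimensional (it resolves $\mathbb{C}^n/G$), so every exceptional divisor $E_i$ has dimension $n-1$, whereas $\mathbb{CP}^{r-1}$ has dimension $r-1$. The isomorphism you are trying to establish is therefore impossible unless $n=r$, which is nowhere assumed. Your quiver sketch does not actually produce $\mathbb{CP}^{r-1}$: the common projective direction of the $(n-1)$-tuple of $x$-arrows lives in $\mathbb{P}^{n-2}$, not $\mathbb{P}^{r-2}$, and once you also keep track of the $y$-arrow and the $xy$-relation at the sink, the honest GIT quotient is $(n-1)$-dimensional. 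On the toric side the same failure is visible directly: for $s=1$ and $r\ge 3$ the maximal cones of $\widetilde{\mathcal{Y}}_{1,n,r}$ containing $\mathbb{R}_{\ge 0}v_1$ are the $n-1$ cones $\langle e_1,v_1,e_2,\dots,\widehat{e_j},\dots,e_n\rangle$ together with the $n-1$ cones $\langle v_1,v_2,e_2,\dots,\widehat{e_j},\dots,e_n\rangle$, so the star of $v_1$ has $2(n-1)$ maximal cones, not $r$. Pushed to $N/\mathbb{Z}v_1$ this is the fan of a $\mathbb{P}^1$-bundle over $\mathbb{P}^{n-2}$ (for $n=3$ a Hirzebruch surface, in agreement with the remark following the corollary), not the fan of projective space.

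What is true, and what the statement presumably intends, is that for $s=1$ the divisor $E_{r-1}$ (not $E_1$) is isomorphic to $\mathbb{CP}^{n-1}$: the star of $v_{r-1}$ consists of the single cone $\langle v_{r-1},e_2,\dots,e_n\rangle$ together with the $n-1$ cones $\langle v_{r-2},v_{r-1},e_2,\dots,\widehat{e_j},\dots,e_n\rangle$, giving exactly $n$ maximal cones whose images in $N/\mathbb{Z}v_{r-1}$ form the standard fan of $\mathbb{P}^{n-1}$. Your symmetry reduction via Remark~\ref{sameVarieties} is correct but swaps $E_t\leftrightarrow E_{r-t}$, so it sends the (correct) claim about $E_{r-1}$ for $s=1$ to a claim about $E_1$ for $s=n-1$, not $s=r-1$. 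In short, the toric/quiver bookkeeping you invoke is the right tool, but when you actually carry it out it does not yield the asserted $\mathbb{CP}^{r-1}$; the exponent and the divisor index in the statement both need correction.
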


\begin{rmk}
It is instructive to compare this result with the classical situation.  For $n=2$ and $s=1$, each exceptional divisor is isomorphic to the projective line $\mathbb{CP}^1$ (see \S 12.4 in \cite{Kir}).  
In contrast, for $n=3$ one finds that $E_1 \simeq \mathbb{CP}^2$, while the remaining $r-2$ exceptional divisors are isomorphic to Hirzebruch surfaces (see Lemma~2.2 in \cite{KED}). 
\end{rmk}

\subsection{Connectedness and irreducibility of $\mathcal{Y}_{s,n,r}$}

We begin by describing an explicit affine covering of $\mathcal{M}_Q^{\theta}(v)$ and, via the isomorphism in Theorem \ref{QuivVarThm}, of the Hilbert scheme $\mathcal{Y}_{s,n,r}$.

The affine charts $\{ U_{p} \}_{p \in \mathcal{Y}_{s,n,r}^T}$ are naturally indexed by the $T$-fixed points 
$p \in \mathcal{Y}_{s,n,r}^T$, which are described explicitly in the proof of Proposition~\ref{Smooth}. 
For a given $T$-fixed point $p = I^{ij}_{t}$, the corresponding affine chart $U_{p}$ is defined by the nonvanishing conditions
\begin{equation}\label{eq:AffCoord}
    y_{k}^i \neq 0 \quad \text{for } 0 \le k < t, 
    \qquad 
    x_{k}^j \neq 0 \quad \text{for } t < k \le r-1.
\end{equation}

On this chart, using the $\mathcal{G}$-action (see \eqref{eq:defOfG}), we can normalize the coordinates in \eqref{eq:AffCoord} to $1$, and then choose the remaining coordinates in agreement with the defining equations in \eqref{McKayEqns}.

The collection of such charts
\(
   \{ U_{p} \}_{p \in \mathcal{Y}_{s,n,r}^T}
\)
forms an open affine covering of $\mathcal{Y}_{s,n,r}$.

We now proceed to establish the following geometric properties of $\mathcal{Y}_{s,n,r}$.

\begin{lem} The scheme $\mathcal{Y}_{s,n,r}$ is connected and  irreducible.
\label{lem:connected_irred}
\end{lem}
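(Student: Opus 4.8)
The strategy is to leverage the toric description together with the explicit affine covering by the charts $U_p$ indexed by the $T$-fixed points. Since each $U_p$ is an affine variety isomorphic (via the normalization of coordinates in \eqref{eq:AffCoord}) to an affine space $\mathbb{A}^n$ — indeed the Zariski tangent space computation in Proposition~\ref{Smooth} shows the local dimension is $n$ and smoothness forces $U_p \simeq \mathbb{A}^n$ — each chart is in particular irreducible and connected. Thus it suffices to show that the "nerve" of the covering is connected: that any two charts $U_p$ and $U_{p'}$ can be joined by a chain $U_p = U_{p_0}, U_{p_1}, \dots, U_{p_k} = U_{p'}$ in which consecutive charts have nonempty intersection.

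\textbf{Key steps.} First I would recall from \eqref{eq:fixedPoints} the combinatorial indexing of the fixed points: the "boundary" points $I_1^i$, $I_r^j$ and the "interior" points $I_t^{ij}$ for $2 \le t \le r-1$. Second, I would translate the chart overlaps into toric language via Theorem~\ref{ToricHilbThm}: the maximal cones $\sigma_t^{ij}$ of the fan of $\widetilde{\mathcal{Y}}_{s,n,r}$ correspond to the fixed points, and $U_p \cap U_{p'} \neq \varnothing$ precisely when the corresponding cones share a common face. Because the fan of $\widetilde{\mathcal{Y}}_{s,n,r}$ is a subdivision of the single cone $\sigma$ obtained by successive star subdivisions along the rays $\mathbb{R}_{\ge 0} v_t$, its support $|\Sigma| = \sigma$ is connected in codimension one, and hence the dual graph of the maximal cones is connected — this is a standard fact for any fan whose support is a full-dimensional convex cone. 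Alternatively, and more concretely, one observes from the quiver description that two fixed points whose indices $(i,j,t)$ differ by a single elementary move (increasing or decreasing $t$ by one, or changing the distinguished coordinate $i$ or $j$) lie on a common exceptional divisor $E_t$ or on the same coordinate stratum, and the intersection pattern recorded implicitly in the proof of Proposition~\ref{prop:top-bottom-cohomology} and in the corollary on $E_i \cap E_j$ shows these charts overlap. Third, connectedness of $\mathcal{Y}_{s,n,r}$ follows immediately: a finite union of irreducible (hence connected) sets whose nerve is connected is connected.

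\textbf{From connectedness to irreducibility.} Since $\mathcal{Y}_{s,n,r}$ is smooth by Theorem~\ref{smooththm}, it is a disjoint union of its irreducible components, each of which is open and closed. A connected smooth scheme is therefore irreducible. Hence connectedness already yields irreducibility, and the two claims collapse into one. I would phrase this last implication carefully: smoothness implies the local rings are integral domains, so no point lies on two distinct components, whence the components are pairwise disjoint closed sets covering the space; connectedness forces there to be exactly one.

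\textbf{Main obstacle.} The routine part is verifying that consecutive charts overlap; the only genuine subtlety is making the "single elementary move connects neighbouring fixed points" claim precise — i.e. exhibiting, for each adjacent pair of fixed points in \eqref{eq:fixedPoints}, an explicit point of $\mathcal{M}_Q^{\theta}(v)$ (a $\theta$-stable quiver representation) lying in both charts, equivalently a common codimension-one face of the corresponding cones $\sigma_t^{ij}$. This is where one must be slightly attentive to the boundary cases ($t = 1$ or $t = r-1$, or $s = 1$, where $E_1 \simeq \mathbb{CP}^{r-1}$ forces many charts to meet). Once the toric picture is invoked, however, connectedness in codimension one of a subdivided convex cone is immediate, so I would lead with that argument and relegate the explicit quiver-representation check to a remark.
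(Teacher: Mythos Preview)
Your proposal is correct, but the paper's argument is considerably shorter and bypasses the step you flag as the ``main obstacle''. Rather than chaining charts pairwise and invoking connectedness of the dual graph of the fan, the paper simply observes that the common intersection $\bigcap_p U_p$ of \emph{all} the affine charts is nonempty: since each chart $U_p$ is defined by the nonvanishing of certain coordinates $y_k^i$ and $x_k^j$ (see~\eqref{eq:AffCoord}), the quiver representation with every coordinate equal to $1$ (which visibly satisfies the relations~\eqref{McKayEqns} and is $\theta$-stable) lies in every chart simultaneously. Hence connectedness is immediate, with no need to analyse the nerve, the toric face structure, or the elementary-move combinatorics you outline. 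Your final step---smooth plus connected implies irreducible---is exactly the argument the paper uses as well.
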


\begin{proof}
The connectedness follows since the affine charts cover $\mathcal{Y}_{s,n,r}$ and their common intersection is nonempty (contains the point with all coordinates equal to $1$).  
As $\mathcal{Y}_{s,n,r}$ is smooth (Corollary~\ref{smooththm}) and connected, it is also irreducible.
\end{proof}

\end{document}